\newtheorem{Theorem}{Theorem}
\newtheorem{Lemma}{Lemma}
\newtheorem{Proposition}{Proposition}
\newtheorem{Definition}{Definition}
\title{\LARGE \bf
    On Dual of LMIs for Absolute Stability Analysis of \\
    Nonlinear Feedback Systems with \\
    Static O'Shea-Zames-Falb Multipliers
}
\author{Hibiki Gyotoku, Tsuyoshi Yuno, Yoshio Ebihara, Victor Magron, Dimitri Peaucelle, and Sophie Tarbouriech
    \thanks{
        Hibiki Gyotoku is with the Graduate School of Information Science and Electrical Engineering, Kyushu University, 
        Fukuoka 8190395, Japan 
        {\tt\small gyotoku.hibiki.327@s.kyushu-u.ac.jp}.   %
        Tsuyoshi Yuno and Yoshio Ebihara are with the Faculty of Information Science and Electrical Engineering, Kyushu University, 
        Fukuoka 8190395, Japan
        {\tt\small yuno@cig.ees.kyushu-u.ac.jp; ebihara@ees.kyushu-u.ac.jp}.  %
V. Magron, D. Peaucelle, and S. Tarbouriech are
LAAS-CNRS, Universit\'{e} de Toulouse, CNRS, Toulouse, France.  
This work was supported by JSPS KAKENHI Grant Number JP21H01354.  
This work was also supported by the AI Interdisciplinary Institute ANITI funding, through the French "Investingfor the Future PIA3" program under the Grant agreement n°ANR-19-PI3A-0004 as well as by the National Research Foundation, Prime Minister's Oﬃce, Singapore under its Campus for Research Excellence and Technological Enterprise (CREATE) programme. }
}
\newcommand{\nc}{\newcommand}
\nc{\tbf}[1]{\textbf{#1}}
\nc{\bb}[1]{\mathbb{#1}}
\nc{\mrm}[1]{\mathrm{#1}}
\nc{\mbf}[1]{\mathbf{#1}}
\nc{\cl}[1]{\mathcal{#1}}
\nc{\projection}[2]{\cl{P}_{\mrm{#1}}\left(#2\right)}
\nc{\slope}[2]{\mrm{slope}[#1,#2]}
\nc{\trace}{\mrm{trace}}
\nc{\He}{\mrm{He}}
\nc{\rank}[1]{\mrm{rank}(#1)}
\begin{document}
\maketitle
\thispagestyle{empty}
\pagestyle{empty}

%
\begin{abstract}
    This study investigates the absolute stability criteria 
    based on the framework of integral quadratic constraint (IQC) 
    for feedback systems with slope-restricted nonlinearities.  
    In existing works, well-known absolute stability certificates 
    expressed in the IQC-based linear matrix inequalities (LMIs) were derived, 
    in which the input-to-output characteristics of the slope-restricted nonlinearities 
    were captured through static O'Shea-Zames-Falb multipliers. 
    However, since these certificates are only sufficient conditions, 
    they provide no clue about the absolute stability in the case where the LMIs are infeasible. 
    In this paper, by taking advantage of the duality theory of LMIs, 
    we derive a condition for systems to be \emph{not} absolutely stable 
    when the above-mentioned LMIs are infeasible. 
    In particular, we can identify a destabilizing nonlinearity within 
    the assumed class of slope-restricted nonlinearities as well as 
    a non-zero equilibrium point of the resulting closed-loop system, 
    by which the system is proved to be not absolutely stable. 
    We demonstrate the soundness of our results by numerical examples.
\end{abstract}

%
\section{Introduction}
Recently, there has been a growing attention to control theoretic approaches to 
the analysis of optimization algorithms \cite{Lessard_SIAM2016}, 
stability analysis of dynamic neural networks (NNs) 
\cite{Revay_LCSS2021,Ebihara_EJC2021,Ebihara_CDC2021}, 
and performance analysis of control systems driven by NNs 
\cite{Yin_IEEE2022, Scherer_IEEEMag2022,Souza_Automatica2023}.  
These algorithms, NNs and control systems can be 
modeled as feedback systems consisting 
of linear time-invariant (LTI) systems and nonlinear operators, 
and thus control theory can be applied to the analyses of these objects \cite{Tarbouriech_2011}. 
In particular, since NNs have a large number of various nonlinear activation operators 
such as hyperbolic tangent (tanh), sigmoid, and rectified linear unit (ReLU), 
it is required to establish a method of analyzing feedback systems 
with a variety of nonlinear operators.

Against this background, we tackle the problem of absolute stability analysis 
of nonlinear feedback systems. 
Here, feedback systems are said to be absolutely stable 
if their origins are globally asymptotically stable for all nonlinear operators that belong to the assumed class  
\cite{Khalil_2002}.
In particular, in this paper, we focus on systems 
with slope-restricted and repeated nonlinear operators \cite{Valmorbida_IEEE2018,Fagundes_SCL2024}.  
O'Shea-Zames-Falb (OZF) multipliers \cite{O'Shea_IEEE1967,Zames_SIAM1968} 
are known to be effective for capturing 
the input-to-output characteristics of slope-restricted nonlinearities \cite{Carrasco_EJC2016}. 
By using OZF multipliers in the framework of Integral Quadratic Constraint (IQC)
\cite{IQC}, 
we can drive linear matrix inequality (LMI) conditions to ensure the absolute stability of the feedback systems.
However, since these LMI conditions are only sufficient conditions, 
we can conclude nothing about the absolute stability when the LMIs are infeasible.

To address this issue, we focus on the dual LMIs \cite{Scherer_EJC2006}
of these IQC-based LMIs.
As the main result, 
we show that, 
if the solution of the dual LMI satisfies a certain rank condition, then 
1) it is possible to extract a nonlinear operator, 
within the assumed class of slope-restricted nonlinearities, 
that destabilizes the target feedback system 
2) the resulting closed-loop system with the extracted nonlinearity has 
a non-zero equilibrium point whose value can be explicitly identified, 
by which the closed-loop system is proved to be not globally asymptotically stable, 
3) and hence the feedback system is not absolutely stable. 
We illustrate the effectiveness of the main result by numerical examples.  
We finally note that, in our recent paper \cite{Yuno_MICNON2024}, 
we also considered the dual of IQC-based LMIs to obtain instability certificates
for nonlinear feedback systems.  
However, the nonlinearities there were fixed to be ReLUs.  
In the present paper, we show that we can extract 
a destabilizing nonlinearity within the assumed class of slope-restricted nonlinearities
as stated above, and this clearly distinguishes the current contribution
from that of  \cite{Yuno_MICNON2024}.  

Notation: 
The set of $n \times m$ real matrices is denoted by $\bb{R}^{n \times m}$. 
We denote 
the $n \times n$ identity matrix, the $n \times n$ zero matrix, 
and the $n \times m$ zero matrix 
by $I_n$, $0_n$, $0_{n,m}$, respectively. 
For a matrix $A$, we write $A\ge 0$ to denote that $A$ is entrywise nonnegative. 
We denote the set of $n \times n$ real symmetric, positive semidefinite, and positive definite matrices by 
$\bb{S}^n$, $\bb{S}_+^n$, and $\bb{S}_{++}^n$, respectively. 
For $A\in\bb{S}^n$, we write $A\succ 0 \ (A\prec 0)$ to denote 
that $A$ is positive (negative) definite. 
For $A\in\bb{R}^{n \times n}$ and $B\in\bb{R}^{n \times m}$, 
$(\ast)^{T}AB$ is a shorthand notation of $B^{T}AB$, and we also define
$\He\{A\}:=A+A^T$. 
For $v\in\bb{R}^n$, we denote by $\|v\|$ its standard Euclidean norm. 
The induced norm of a (possibly nonlinear) operator $\Phi:\bb{R}^m \to \bb{R}^n$ 
is defined by 
$\|\Phi\|:=\sup_{v\in\bb{R}^m \backslash \{0\}}\dfrac{\|\Phi(v)\|}{\|v\|}$. 
For $A\in\bb{R}^{n \times n}$, we define $|A|_{\mrm{d}}\in\bb{R}^{n \times n}$ 
by $|A|_{\mrm{d},i,i}=A_{i,i}$ and $|A|_{\mrm{d},i,j}=-|A_{i,j}| \ (i \ne j)$. 
We also define 
\begin{align*}
    &
    \bb{D}^{m}:=\left\{ M\in\bb{R}^{m \times m}:M_{i,j}=0 \ (i \ne j, \ i,j=1,\ldots,m) \right\}, 
    \\
    &
    \bb{OD}^{m}:=\left\{ M\in\bb{R}^{m \times m}:M_{i,i}=0 \ (i=1,\ldots,m) \right\}.
\end{align*}
We finally define $\cl{P}_{\mrm{d}}:\bb{R}^{m \times m} \to \bb{D}^{m}$ 
as the projection onto $\bb{D}^m$, 
and $\cl{P}_{\mrm{od}}:\bb{R}^{m \times m} \to \bb{OD}^{m}$ 
as the projection onto $\bb{OD}^m$. 

%
\section{IQC and Static O'Shea-Zames-Falb Multipliers}
\subsection{Basic Stability Conditions Based on IQC}
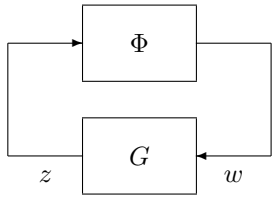
\begin{figure}[b]
    \centering
    \begin{picture}(3.5,2.5)(0,0)
\put(0,2){\vector(1,0){1}}
\put(1,1.5){\framebox(1.5,1){$\Phi$}}
\put(2.5,2){\line(1,0){1}}
\put(3.5,2){\line(0,-1){1.5}}
\put(3.5,0.5){\vector(-1,0){1}}
\put(3,0.3){\makebox(0,0)[t]{$w$}}
\put(1,0){\framebox(1.5,1){$G$}}
\put(1,0.5){\line(-1,0){1}}
\put(0.5,0.3){\makebox(0,0)[t]{$z$}}
\put(0,0.5){\line(0,1){1.5}}

\end{picture}
    \caption{Nonlinear Feedback System $\Sigma$}
    \label{fig:system}
\end{figure}
Let us consider the feedback system $\Sigma$ shown in Fig. \ref{fig:system}. 
Here, $G$ is a linear system described by
\begin{align}
    G:
    \begin{cases}
        \dot{x}(t)=Ax(t)+Bw(t) ,\\
        z(t)=Cx(t)+Dw(t), 
    \end{cases}
    \label{eq:systemG}
\end{align}
where $A\in\bb{R}^{n \times n}$, $B\in\bb{R}^{n \times m}$, $C\in\bb{R}^{m \times n}$, 
and $D\in\bb{R}^{m \times m}$.
We assume that $A$ is Hurwitz stable. 
On the other hand, 
$\Phi:\bb{R}^{m}\to\bb{R}^{m}$ stands for a static nonlinearity described by
\begin{align}
    w(t)=\Phi(z(t)).
    \label{eq:nonlinear}
\end{align}
We also assume that the feedback system $\Sigma$ is well-posed. 

In this paper, we are interested in the stability analysis 
of the feedback system $\Sigma$ in the framework of IQC with multipliers. 
For simplicity, we say that the feedback system $\Sigma$ is stable 
if the origin $x=0$ is globally asymptotically stable. 
The next proposition forms the basis 
of the stability analysis in the framework of IQC. 
\begin{Proposition}[\cite{IQC}]\label{pr:IQC_basic}
    Let us define $\mathbf{\Pi^{\star}}\subset\bb{S}^{2m}$ by
    \begin{align}
        \mathbf{\Pi^{\star}}:=\Biggl\{ \Pi\in\bb{S}^{2m} {}:{}
        (\ast)^T \Pi \begin{bmatrix}
            \zeta \\ 
            \Phi(\zeta)
        \end{bmatrix} \ge 0 \ \forall{\zeta}\in\bb{R}^m \Biggr\}. 
        \label{eq:Pi_star}
    \end{align}%
    Then, the system $\Sigma$ is stable 
    if there exist $P \succ 0$ and $\Pi \in \mathbf{\Pi^{\star}}$ such that
    \begin{align}
        \PLMIfirst+(\ast)^T \Pi \begin{bmatrix}
            C&D \\ 
            0&I_m
        \end{bmatrix} \prec 0.
        \label{eq:PLMI_basic}
    \end{align}%
\end{Proposition}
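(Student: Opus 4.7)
The plan is a direct quadratic Lyapunov argument driven by the LMI. First, I would take $V(x) := x^T P x$, which is positive definite and radially unbounded because $P \succ 0$. Differentiating along trajectories of $\Sigma$ and substituting $\dot x = Ax + Bw$ gives the quadratic form
\[
\dot V(x(t)) = \begin{bmatrix} x(t) \\ w(t) \end{bmatrix}^T \begin{bmatrix} PA+A^TP & PB \\ B^TP & 0 \end{bmatrix} \begin{bmatrix} x(t) \\ w(t) \end{bmatrix},
\]
i.e.\ exactly the quadratic form associated with the first block of the LMI \eqref{eq:PLMI_basic}.

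Next, I would convert the pointwise IQC encoded by $\mathbf{\Pi^{\star}}$ into a time-domain inequality. Since $w(t) = \Phi(z(t))$ and $z(t) = Cx(t) + Dw(t)$ at every instant $t$, applying the defining inequality of $\mathbf{\Pi^{\star}}$ with $\zeta = z(t)$ yields
\[
0 \le \begin{bmatrix} z(t) \\ w(t) \end{bmatrix}^T \Pi \begin{bmatrix} z(t) \\ w(t) \end{bmatrix} = \begin{bmatrix} x(t) \\ w(t) \end{bmatrix}^T \left( (\ast)^T \Pi \begin{bmatrix} C & D \\ 0 & I_m \end{bmatrix} \right) \begin{bmatrix} x(t) \\ w(t) \end{bmatrix}.
\]
Adding this nonnegative quantity to $\dot V$ and invoking the strict negative definiteness in \eqref{eq:PLMI_basic}, I obtain an $\epsilon > 0$ with
\[
\dot V(x(t)) \le -\epsilon\left(\|x(t)\|^2 + \|w(t)\|^2\right) \le -\epsilon \|x(t)\|^2 .
\]
Global asymptotic stability of $x = 0$ then follows from the standard Lyapunov theorem, since $V$ is positive definite and radially unbounded.

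The main subtlety I foresee is not in the chain of inequalities, which is essentially a one-line S-procedure in the time domain, but in recognizing \emph{why} this one-liner suffices here: the multiplier class $\mathbf{\Pi^{\star}}$ encodes a pointwise (``hard'') IQC rather than an integral/frequency-domain one, so the inequality transfers instantaneously to trajectories and no KYP lemma or Parseval detour is required. The remaining housekeeping point is that the implicit equation $w = \Phi(Cx + Dw)$ must be resolvable for $w$ so that $\dot x = Ax + B\Phi(Cx + Dw(x))$ defines a bona fide ODE with globally extendable solutions; this is precisely what the standing well-posedness assumption on $\Sigma$ buys us, after which the Lyapunov argument applies unchanged.
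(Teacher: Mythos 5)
Your proof is correct. The paper itself offers no proof of this proposition --- it is imported verbatim from the cited reference \cite{IQC} --- so there is no in-paper argument to compare against. Your time-domain Lyapunov/S-procedure derivation is the standard way to establish this ``hard'' (pointwise) IQC version of the result, and you correctly identify the two points that make the one-liner legitimate: the multiplier inequality holds instant-by-instant (so no KYP/Parseval machinery is needed, unlike the general soft-IQC theorem of \cite{IQC}), and well-posedness of the loop $w=\Phi(Cx+Dw)$ is what turns the interconnection into a genuine ODE to which the Lyapunov theorem applies. The only cosmetic remark is that the conclusion $\dot V\le-\epsilon\|x\|^2$ already suffices for global asymptotic stability without worrying about whether $w=0$ when $x=0$, since $\epsilon\|x\|^2$ is itself a positive definite, radially unbounded comparison function.
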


In IQC-based stability conditions such as Proposition \ref{pr:IQC_basic}, 
it is of prime importance 
to employ a set of multipliers $\mathbf{\Pi} \subset \mathbf{\Pi}^{\star}$ 
that allows us to reduce \eqref{eq:PLMI_basic} into a numerically tractable LMI problem 
and can capture the input-output properties of $\Phi$ as accurately as possible. 
In this paper, we focus
on static OZF multipliers for slope-restricted nonlinearities.

\subsection{Static O'Shea-Zames-Falb Multipliers}
Some specific definitions are necessary to describe static OZF multipliers. 
A matrix $M\in\bb{R}^{m \times m}$ is said to be Z-matrix 
if $M_{i,j} \le 0$ for all $i \ne j$. 
Moreover, $M$ is said to be doubly hyperdominant 
if it is a Z-matrix and $M\1 \ge 0$, $\1^{T}M \ge 0$, 
where $\1\in\bb{R}^m$ stands for the all-ones-vector. 
In addition, $M$ is said to be doubly dominant 
if $|M|_{\mrm{d}}\1 \ge 0$, $\1^{T}|M|_{\mrm{d}}$. 
In this paper, we denote by $\bb{Z}^m, \bb{DHD}^m, \bb{DD}^m \subset \bb{R}^{m \times m}$ 
the sets 
of Z-matrices, doubly hyperdominant matrices, and doubly dominant matrices, respectively. 
Namely, we define
\begin{align}
    \begin{aligned}
        &\bb{DHD}^m:=\{ M\in\bb{Z}^m:M\1 \ge 0, \1^{T}M \ge 0 \}, \\
        &\bb{DD}^m:=\{ M\in\bb{R}^{m \times m}:|M|_{\mrm{d}}\1 \ge 0, \1^{T}|M|_{\mrm{d}} \ge 0 \}.
    \end{aligned}
    \label{eq:DHD-DD_def}
\end{align}
It is obvious that $\bb{DHD}^m \subsetneq \bb{DD}^m$.

As previously stated, we focus on slope-restricted nonlinearities in this paper. 
\begin{Definition}\label{de:slope-restricted}
    Let $\mu \le 0 \le \nu$. 
    Then, a nonlinearity $\phi:\bb{R}\to\bb{R}$ is said to be slope-restricted, 
    in short $\phi\in\slope \mu  \nu $, if $\phi(0)=0$ and 
    \begin{align}
        \mu \le \frac{\phi(p)-\phi(q)}{p-q} \le \nu \ (\forall{p,q}\in\bb{R}, \ p \ne q).
        \notag
    \end{align}
\end{Definition}

Let us define $\mrm{diag}_{m}(\phi):\bb{R}^{m}\to\bb{R}^{m}$ by 
\begin{align}
    \mrm{diag}_{m}(\phi):=\mrm{diag}(\underbrace{\phi,\ldots,\phi}_{m}). 
    \label{eq_diag}
\end{align}
We further make the following definition.  
\begin{Definition}
    For $\mu \le 0 \le \nu$, we define 
    \begin{align}
        &
        \mathbf{\Phi}_{\mu,\nu}^{m}:=\{ \Phi:\Phi=\mrm{diag}_{m}(\phi),  \ \phi\in\slope \mu  \nu  \}, 
        \notag \\
        &
        \mathbf{\Phi}_{\mrm{odd}}^{m}:=\{ \Phi:\Phi=\mrm{diag}_{m}(\phi), \ \phi \text{ is odd} \}.
        \notag
    \end{align}
\end{Definition}

Under these definitions, the main result of \cite{OZFbasicresult} 
on static OZF multipliers \cite{O'Shea_IEEE1967,Zames_SIAM1968}
for slope-restricted nonlinearities can be summarized by the next lemma.
\begin{Lemma}[\cite{OZFbasicresult}]\label{lemma:OZF_basic}
    Let $\mu \le 0 \le\nu$. 
    Then, for all $\Phi\in\mathbf{\Phi}_{\mu,\nu}^{m}$ and $M\in\bb{DHD}^{m}$, we have
    \begin{align}
        \begin{aligned}
            &
            (\ast)^{T} \cl{M}(M,\mu,\nu)\begin{bmatrix}
                \zeta \\
                \Phi(\zeta)
            \end{bmatrix} \ge 0 \ \forall{\zeta}\in\bb{R}^m, \\
            &
            \cl{M}(M,\mu,\nu):=(\ast)^{T}\begin{bmatrix}
                0_m & M \\
                M^T & 0_m
            \end{bmatrix} \begin{bmatrix}
                \nu I_m & -I_m \\
                - \mu I_m & I_m
            \end{bmatrix}.
        \end{aligned}
        \notag
    \end{align}
    Moreover, this result holds 
    for all $\Phi\in\mathbf{\Phi}_{\mu,\nu}^{m}\cap\mathbf{\Phi}_{\mrm{odd}}^{m}$ 
    and $M\in\bb{DD}^m$. 
\end{Lemma}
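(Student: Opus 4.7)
The plan is to reduce the asserted nonnegativity to the classical (single-function) static OZF inequality
\[
\psi(\zeta)^{T}M\zeta\ge 0,\qquad\forall\zeta\in\bb{R}^{m},
\]
which holds for every scalar monotone nondecreasing $\psi$ with $\psi(0)=0$ and every $M\in\bb{DHD}^{m}$; this is the established kernel result of the static OZF machinery (\cite{OZFbasicresult}).

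First, by direct multiplication of the outer factors in $\cl{M}(M,\mu,\nu)$, one obtains
\[
(\ast)^{T}\cl{M}(M,\mu,\nu)\begin{bmatrix}\zeta\\\Phi(\zeta)\end{bmatrix}=2\,(\nu\zeta-\Phi(\zeta))^{T}M(\Phi(\zeta)-\mu\zeta).
\]
Setting $u:=\nu\zeta-\Phi(\zeta)$ and $v:=\Phi(\zeta)-\mu\zeta$, it suffices to show $u^{T}Mv\ge 0$. Because $\phi\in\slope{\mu}{\nu}$ with $\phi(0)=0$, the scalar maps $\alpha(s):=\nu s-\phi(s)$ and $\beta(s):=\phi(s)-\mu s$ are both monotone nondecreasing (their slopes lie in $[0,\nu-\mu]$) and vanish at $s=0$, so $u=\alpha(\zeta)$ and $v=\beta(\zeta)$ componentwise.

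The key reduction is to display $u$ as a monotone nondecreasing function of $v$. When $\beta$ is strictly increasing, $\beta^{-1}$ exists and is monotone nondecreasing, and the composition $h:=\alpha\circ\beta^{-1}$ is monotone nondecreasing with $h(0)=0$; the classical OZF inequality applied to $h$, $M\in\bb{DHD}^{m}$, and the vector $v$ then yields $u^{T}Mv=h(v)^{T}Mv\ge 0$. The general case is recovered by perturbing $\phi$ to $\phi+\epsilon\,\mrm{id}$ (which lies in $\slope{\mu+\epsilon}{\nu+\epsilon}\subseteq\slope{\mu}{\nu+\epsilon}$ and makes the associated $\beta_{\epsilon}(s)=\phi(s)-\mu s+\epsilon s$ strictly increasing): the strict case gives $u^{T}M(v+\epsilon\zeta)\ge 0$, and letting $\epsilon\downarrow 0$ delivers $u^{T}Mv\ge 0$.

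For the odd case, I would factor signs out of $\phi(\zeta_i)=s_i\phi(|\zeta_i|)$, with $s_i:=\mrm{sgn}(\zeta_i)$, obtaining $u^{T}Mv=\sum_{i,j}s_is_jM_{ij}\tilde u_i\tilde v_j$, where $\tilde u_i:=\alpha(|\zeta_i|)\ge 0$ and $\tilde v_j:=\beta(|\zeta_j|)\ge 0$. Since $s_is_jM_{ij}\ge -|M_{ij}|=(|M|_{\mrm{d}})_{ij}$ for $i\ne j$, while $(|M|_{\mrm{d}})_{ii}=M_{ii}$, and all $\tilde u_i\tilde v_j$ are nonnegative, the sum dominates $\tilde u^{T}|M|_{\mrm{d}}\tilde v$. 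The hypothesis $M\in\bb{DD}^{m}$ is equivalent to $|M|_{\mrm{d}}\in\bb{DHD}^{m}$, so applying the first part (now to the matrix $|M|_{\mrm{d}}$ and the componentwise absolute value $|\zeta|$) yields $\tilde u^{T}|M|_{\mrm{d}}\tilde v\ge 0$, closing the argument. The main obstacle is the change-of-variable step together with its approximation extension, which is the only place where the slope restriction is actually exploited; everything else is bookkeeping on top of the classical OZF inequality.
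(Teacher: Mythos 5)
The paper does not actually prove this lemma---it is imported verbatim from the cited reference \cite{OZFbasicresult}---so there is no in-paper argument to compare against; I can only assess your proposal on its own terms, and it is correct. Your route is the standard one in the OZF literature: expanding the outer factors does give $2(\nu\zeta-\Phi(\zeta))^{T}M(\Phi(\zeta)-\mu\zeta)$, the loop transformation $u=\alpha(\zeta)$, $v=\beta(\zeta)$ with $\alpha(s)=\nu s-\phi(s)$, $\beta(s)=\phi(s)-\mu s$ correctly converts slope-restriction on $[\mu,\nu]$ into monotonicity, and the perturbation $\phi\mapsto\phi+\epsilon\,\mrm{id}$ legitimately handles non-invertible $\beta$ (note $\alpha$ is unchanged under this perturbation while $v$ becomes $v+\epsilon\zeta$, exactly as you state). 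The odd case is also sound: $\alpha$ and $\beta$ inherit oddness from $\phi$, so $\tilde u,\tilde v\ge 0$, the termwise bound $s_is_jM_{ij}\tilde u_i\tilde v_j\ge (|M|_{\mrm{d}})_{ij}\tilde u_i\tilde v_j$ holds (the only delicate index is $i=j$ with $\zeta_i=0$, where $\tilde u_i=0$ kills the term), and $M\in\bb{DD}^m$ is indeed equivalent to $|M|_{\mrm{d}}\in\bb{DHD}^m$ since $|M|_{\mrm{d}}$ is a Z-matrix by construction. Two caveats worth flagging: first, the entire combinatorial content is outsourced to the ``kernel'' inequality $\psi(\zeta)^{T}M\zeta\ge 0$ for monotone $\psi$ with $\psi(0)=0$ and $M\in\bb{DHD}^m$, which is itself the nontrivial half of the cited theorem (Willems' characterization of doubly hyperdominant matrices), so your argument is a reduction rather than a from-scratch proof---acceptable here since the paper cites the whole lemma anyway; second, $h=\alpha\circ\beta^{-1}$ is a priori defined only on the range of $\beta$, so one should note that it extends to a monotone function on all of $\bb{R}$ vanishing at the origin before invoking the kernel inequality at the point $v$.
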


On the basis of this lemma, we define 
\begin{align}
    &
    \mathbf{\Pi}_{m,\bb{DHD}}:= 
    \left\{ \Pi\in\bb{S}^{2m}:\Pi=\cl{M}(M,\mu,\nu), \ M\in\bb{DHD}^m \right\},
    \notag \\
    &
    \mathbf{\Pi}_{m,\bb{DD}}:= 
    \left\{ \Pi\in\bb{S}^{2m}:\Pi=\cl{M}(M,\mu,\nu), \ M\in\bb{DD}^m \right\}.
    \notag
\end{align} 
Here, $\mathbf{\Pi}_{m,\bb{DHD}}$ is used if $\Phi\in\mathbf{\Phi}_{\mu,\nu}^{m}$ 
and $\mathbf{\Pi}_{m,\bb{DD}}$ is used 
if $\Phi\in\mathbf{\Phi}_{\mu,\nu}^{m}\cap\mathbf{\Phi}_{\mrm{odd}}^{m}$. 

%
\section{Main Results for Slope-Restricted Nonlinearity $\Phi\in\mbf{\Phi}_{\mu,\nu}^{m}$}
\subsection{LMIs Ensuring Absolute Stability and Their Dual}
We have next lemma from Proposition \ref{pr:IQC_basic} and Lemma \ref{lemma:OZF_basic}.
\begin{Lemma}\label{lemma:DHD}
    Let $\mu \le 0 \le \nu$. 
    Then, the system $\Sigma$ consisting of \eqref{eq:systemG} and \eqref{eq:nonlinear}
    is absolutely stable for $\Phi\in\mbf{\Phi}_{\mu,\nu}^{m}$ 
    if there exist $P\in\bb{S}_{++}^n$ and $M\in\bb{DHD}^m$ 
    such that 
    \begin{align}
        \PLMIfirst + \PLMIDHDsecond \prec 0.  
        \label{eq:lem_DHD}
    \end{align}
\end{Lemma}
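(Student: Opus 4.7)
The plan is to recognize that Lemma \ref{lemma:DHD} is a direct specialization of Proposition \ref{pr:IQC_basic} obtained by instantiating the abstract IQC multiplier $\Pi$ with the explicit static OZF multiplier furnished by Lemma \ref{lemma:OZF_basic}. The argument proceeds in three short steps.

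First, I would fix an arbitrary $\Phi \in \mbf{\Phi}_{\mu,\nu}^{m}$. Given a pair $(P,M)$ with $P \succ 0$ and $M \in \bb{DHD}^m$ that satisfies \eqref{eq:lem_DHD}, I set $\Pi := \cl{M}(M,\mu,\nu)$. Lemma \ref{lemma:OZF_basic} then guarantees that
\[
(\ast)^T \Pi \begin{bmatrix} \zeta \\ \Phi(\zeta) \end{bmatrix} \ge 0 \quad \forall\, \zeta \in \bb{R}^m,
\]
so that $\Pi \in \mbf{\Pi}^{\star}$ holds for this $\Phi$ in the sense of \eqref{eq:Pi_star}.

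Second, I would substitute this $\Pi$ into \eqref{eq:PLMI_basic}; by construction the resulting inequality is literally \eqref{eq:lem_DHD}. Hence all hypotheses of Proposition \ref{pr:IQC_basic} are satisfied for the current $\Phi$, and the system $\Sigma$ is stable (its origin is globally asymptotically stable). Third, since neither $P$ nor $M$ depends on $\Phi$, and $\Phi$ was chosen arbitrarily in $\mbf{\Phi}_{\mu,\nu}^{m}$, the same certificate works uniformly over the entire class, which is exactly the definition of absolute stability.

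I do not anticipate any substantive obstacle. The only bookkeeping point worth verifying is that the pointwise IQC required by \eqref{eq:Pi_star} in Proposition \ref{pr:IQC_basic} matches the form delivered by Lemma \ref{lemma:OZF_basic}, so that no Parseval-type or temporal-integration argument is needed to bridge the two statements; this is immediate because Lemma \ref{lemma:OZF_basic} is already stated pointwise in $\zeta$.
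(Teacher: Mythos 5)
Your proposal is correct and follows exactly the route the paper intends: the paper gives no separate proof of Lemma~\ref{lemma:DHD} beyond stating that it follows from Proposition~\ref{pr:IQC_basic} and Lemma~\ref{lemma:OZF_basic}, which is precisely the instantiation $\Pi=\cl{M}(M,\mu,\nu)$ you carry out. Your closing remark that the certificate $(P,M)$ is independent of $\Phi$, so stability holds uniformly over the whole class, is the right (and only) point needing care, and you handle it correctly.
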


If LMI (\ref{eq:lem_DHD}) is numerically feasible, we can readily conclude 
from Lemma \ref{lemma:DHD} 
that the feedback system $\Sigma$ is absolutely stable for $\Phi\in\mbf{\Phi}_{\mu,\nu}^{m}$. 
However, this LMI condition is generally a sufficient condition 
and hence if the LMI turns out to be infeasible we can conclude nothing 
about the absolute stability of the system $\Sigma$.  
To address this issue, we derive the dual of LMI (\ref{eq:lem_DHD}). 
To that end, we first note that the LMI (\ref{eq:lem_DHD}) can be rewritten, 
equivalently, as follows: \\
\tbf{Primal LMI (For $\Phi\in\mbf{\Phi}_{\mu,\nu}^{m}$)} \\  
Find $P\in\bb{S}_{++}^n$ and $M\in\bb{R}^{m \times m}$ 
such that 
\begin{align}
    \scalebox{0.95}{$
        \begin{aligned}
            &
            \PLMIfirst + \PLMIDHDsecond \prec 0, \hspace*{-5mm}\\
            &
            M\in\bb{Z}^m,\ M\1\ge0,\ \1^{T}M\ge0.
        \end{aligned}
        $}
    \label{eq:PLMI_DHD}
\end{align}

To consider the dual of LMI \eqref{eq:PLMI_DHD}, we define 
\begin{align}
    \bb{Z}_{0}^{m}:= \{ X\in\bb{Z}^{m}:X_{i,i}=0 \ (i=1,\ldots,m) \}.
    \notag
\end{align}
Then, for the Lagrange dual variables 
$H\in\bb{S}_{+}^{n+m}$, $f, g\in\bb{R}_{+}^{m}$ and $X\in\bb{Z}_{0}^{m}$, 
the Lagrangian can be defined as 
\begin{align}
    \scalebox{0.85}{$
        \begin{aligned}
            &
            \cl{L}(P,M,H,f,g,X) \\
            &
            :=\trace\left(\left( \PLMIfirst + \PLMIDHDsecond \right)H\right) \\
            &
            \quad - 2f^{T}M\1 - 2\1^{T}Mg - 2\trace(MX) \\
            &
            =\trace(P\He\{AH_{11}+BH_{12}^{T}\}) \\
            &
            \quad + 2\trace(M(Y-\1f^{T}-g\1^{T}-X))
        \end{aligned}
        $}
    \notag
\end{align} 
where
\begin{align}
    \begin{aligned}
        &
        H=:\begin{bmatrix}
            H_{11} & H_{12} \\
            H_{12}^T & H_{22}
        \end{bmatrix},\ H_{11}\in\bb{S}_{+}^n,\ H_{22}\in\bb{S}_{+}^m, \\
        &
        Y:=\matrixY.
    \end{aligned}
    \notag
\end{align}
For $\cl{L}(P,M,H,f,g,X) \ge 0$ to hold 
for any $P\in\bb{S}_{++}^n$ and $M\in\bb{R}^{m \times m}$, we can select the solution
\begin{align}
    \He\{AH_{11}+BH_{12}^{T}\} \succeq 0, \ Y=\1f^{T}+g\1^{T}+X.
    \notag
\end{align}
We thus arrive at the next result.
\\
\tbf{Dual LMI (For $\Phi\in\mbf{\Phi}_{\mu,\nu}^{m}$)} \\
Find $H\in\bb{S}_{+}^{n+m}$, $f,g\in\bb{R}_{+}^m$, $X\in\bb{Z}_{0}^m$, not all zeros, such that 
\begin{align}
    \begin{aligned}
        &
        \He\{AH_{11}+BH_{12}^{T}\} \succeq 0, \\
        &\begin{multlined}
            \matrixY\\ 
            =\1f^{T}+g\1^{T}+X.
        \end{multlined}
    \end{aligned}
    \label{eq:DLMI_DHD}
\end{align}

From theorems of alternative for LMIs \cite{Scherer_EJC2006}, 
we emphasize that the primal LMI \eqref{eq:PLMI_DHD} is infeasible if and only if 
the dual LMI \eqref{eq:DLMI_DHD} is feasible.  

\subsection{Extraction of Destabilizing Nonlinearity by Dual LMIs}
In this section, we consider the dual LMI \eqref{eq:DLMI_DHD}. 
Due to some technical reasons, in the following subsections, 
we restrict our attention to the case 
$\mu=0$ and $\nu=1$, and we assume $\|D\|<1$.
Then, since $\|D\|<1$, we can readily see that the feedback system $\Sigma$ is well-posed 
because $\|\Phi\| \le 1$ for $\Phi\in\mbf{\Phi}_{0,1}^m$.
Moreover, since $A\in\bb{R}^{n \times n}$ is assumed to be Hurwitz stable, 
and since \eqref{eq:PLMI_DHD} requires $PA+A^TP \prec 0$, we see 
that $P\in\bb{S}_{++}^n$ is automatically satisfied if we simply require $P\in\bb{S}^n$.
Therefore, the primal LMI \eqref{eq:PLMI_DHD} and the dual LMI \eqref{eq:DLMI_DHD} 
reduce respectively to: \\
\tbf{Primal LMI (For $\Phi\in\mbf{\Phi}_{0,1}^{m}$)} \\
Find $P\in\bb{S}^n$ and $M\in\bb{R}^{m \times m}$ such that
\begin{align}\hspace{-9pt}
    \scalebox{0.95}{$
        \begin{aligned}
            &
            \PLMIfirst + \RePLMIDHDsecond \prec 0,  \\
            &
            M\in\bb{Z}^m,\ M\1 \ge 0,\ \1^{T}M \ge 0. 
        \end{aligned}
        $}
    \label{eq:RePLMI_DHD}
\end{align}
\tbf{Dual LMI (For $\Phi\in\mbf{\Phi}_{0,1}^{m}$)} \\
Find $H\in\bb{S}_{+}^{n+m}$, $f,g\in\bb{R}_{+}^m$, $X\in\bb{Z}_{0}^m$, not all zeros, 
such that 
\begin{align}
    \begin{aligned}
        &
        \He\{AH_{11}+BH_{12}^{T}\} = 0, \\
        & 
        \RematrixY=\1f^{T}+g\1^{T}+X. 
    \end{aligned}
    \label{eq:ReDLMI_DHD}
\end{align}

We note that in this dual LMI \eqref{eq:ReDLMI_DHD}
the first inequality constraint in \eqref{eq:DLMI_DHD} has been replaced by 
the equality constraint.  
Regarding this dual LMI \eqref{eq:ReDLMI_DHD}, 
we can obtain the next main result for the slope-restricted nonlinearity  case.
\begin{Theorem}\label{th:DHD}
    Suppose the dual LMI \eqref{eq:ReDLMI_DHD} 
    is feasible and has a solution $H\in\bb{S}_{+}^{n+m}$ of $\rank{H}=1$ 
    given by 
    \begin{align}
        H=\begin{bmatrix}
            h_1 \\
            h_2
        \end{bmatrix} \begin{bmatrix}
            h_1 \\
            h_2
        \end{bmatrix}^T, h_1\in\bb{R}^n, h_2\in\bb{R}^m.
        \notag
    \end{align}
    Then, the following assertions hold: 
    \begin{enumerate}\renewcommand{\labelenumi}{(\roman{enumi})}
        \item \ $h_1 \ne 0$.
        \item Let us define $z^{\ast}:=Ch_1+Dh_2$ and $w^{\ast}:=h_2$. 
        Then, there exists $\phi_{\mrm{wc}}\in\slope{0}{1}$ 
        such that $\phi_{\mrm{wc}}(z_{i}^\ast)=w_{i}^{\ast} \ (i=1,\ldots,m)$. 
        \item Let us define $\Phi_{\mrm{wc}}:=\mrm{diag}_m(\phi_{\mrm{wc}})$. 
        Then, the feedback system $\Sigma$ 
        with the nonlinearity $\Phi=\Phi_{\mrm{wc}}\in\mbf{\Phi}_{0,1}^m$ is not stable. 
        In particular, the state $x$ of the system $\Sigma$ 
        corresponding to the initial state $x(0)=h_1$ is given by $x(t)=h_1 \ (t \ge 0)$.
    \end{enumerate}
\end{Theorem}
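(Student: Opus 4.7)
The plan is to substitute the rank-one factorization $H = [h_1;h_2][h_1;h_2]^\top$ into the two dual constraints in (\ref{eq:ReDLMI_DHD}) and exploit the resulting identities. With $z^{\ast}:=Ch_1+Dh_2$ and $w^{\ast}:=h_2$, the second constraint collapses to
\begin{align*}
w^{\ast}(z^{\ast}-w^{\ast})^\top = \mathbf{1}_m f^\top + g\mathbf{1}_m^\top + X,
\end{align*}
while the first becomes $v h_1^\top + h_1 v^\top = 0$, where $v := Ah_1+Bh_2$. These two identities are the workhorses for all three assertions.

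For (i), I would argue by contradiction. If $h_1=0$ then positive semidefiniteness of $H$ forces $H_{12}=0$, so the displayed equality reduces to $h_2(Dh_2-h_2)^\top=\mathbf{1}_m f^\top + g\mathbf{1}_m^\top + X$. Reading off the $i$th diagonal entry gives $h_{2,i}\bigl((Dh_2)_i-h_{2,i}\bigr)=f_i+g_i\ge 0$, and summing over $i$ yields $h_2^\top D h_2 \ge \|h_2\|^2$. Combined with $\|D\|<1$ this forces $h_2=0$, after which the diagonal equation gives $f_i=g_i=0$, and then $X=0$, contradicting the "not all zeros" clause of the dual feasibility.

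For (ii), the goal is to check pairwise that the data points $(z^{\ast}_i,w^{\ast}_i)$ (together with $(0,0)$) admit a slope-$[0,1]$ interpolant. I would combine the $(i,i),(j,j),(i,j),(j,i)$ entries of the same matrix equality to obtain
\begin{align*}
(w^{\ast}_i-w^{\ast}_j)(z^{\ast}_i-z^{\ast}_j) = (w^{\ast}_i-w^{\ast}_j)^2 - (X_{ij}+X_{ji}),
\end{align*}
where the cross-term is nonnegative because $X\in\bb{Z}_0^m$. This single identity gives both $(w^{\ast}_i-w^{\ast}_j)(z^{\ast}_i-z^{\ast}_j)\ge 0$ and $(w^{\ast}_i-w^{\ast}_j)\bigl[(z^{\ast}_i-z^{\ast}_j)-(w^{\ast}_i-w^{\ast}_j)\bigr]\ge 0$, which are exactly the slope $\ge 0$ and slope $\le 1$ conditions; the degenerate case $z^{\ast}_i=z^{\ast}_j$ forces $w^{\ast}_i=w^{\ast}_j$, and the case $z^{\ast}_i=0$ (equivalently comparing with the mandatory point $(0,0)$ via the diagonal entry) forces $w^{\ast}_i=0$. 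A piecewise linear interpolation through the sorted distinct values, extended by a slope in $[0,1]$ outside the extreme points, then produces the required $\phi_{\mrm{wc}}\in\slope{0}{1}$.

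For (iii), the key observation is that the identity $v h_1^\top+h_1 v^\top=0$ together with $h_1\ne 0$ (from (i)) forces $v=0$: testing with any $u\perp h_1$ shows $v$ lies in the span of $h_1$, and then the equation collapses to $2\alpha h_1 h_1^\top=0$, giving $\alpha=0$. Hence $Ah_1+Bh_2=0$. Now set $x(0)=h_1$; by construction $\Phi_{\mrm{wc}}(Ch_1+Dh_2)=w^{\ast}=h_2$, and well-posedness makes this algebraic loop value unique, so $\dot{x}(0)=Ah_1+Bh_2=0$ and $x(t)\equiv h_1$ solves the closed-loop equation. Since $h_1\ne 0$, this nonzero equilibrium precludes global asymptotic stability, proving non-absolute-stability. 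I expect the main obstacle to be the combinatorial derivation of the pairwise slope bound in (ii); once the right algebraic combination of the four entries of the matrix identity is spotted, the rest of the argument is essentially linear algebra.
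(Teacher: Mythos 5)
Your proposal is correct and follows essentially the same route as the paper: substituting the rank-one factorization into the two dual constraints, reading off the entrywise/quadratic-form consequences of $w^{\ast}(z^{\ast}-w^{\ast})^{T}=\1f^{T}+g\1^{T}+X$ (your four-entry combination is exactly the paper's $(e_i-e_j)^{T}(\cdot)(e_i-e_j)$ computation), and deducing $Ah_1+Bh_2=0$ from $\He\{(Ah_1+Bh_2)h_1^{T}\}=0$ with $h_1\ne 0$. The only cosmetic differences are that you fold the paper's separate ``$H\ne 0$'' step into the single contradiction in (i) and prove the rank-one symmetric-part lemma directly rather than citing a reference.
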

\begin{proof}
    \underline{proof of (i)} \ 
    First, we show $H \ne 0$ to prove that $\rank{H}=1$ can happen. 
    To this end, suppose $H=0$ for contradiction. 
    Then, from the second equality constraint in \eqref{eq:ReDLMI_DHD}, 
    we see $f=0, \ g=0,$ and $X=0$, 
    which contradicts the requirements that $H,f,g,X$ are not all zeros.
    Therefore $H \ne 0$. 
    Next, we suppose $h_1=0$ for contradiction. 
    Then, $h_2 \ne 0$ since $\rank{H}=1$. 
    We see $\trace\{ h_2h_2^{T}(D^{T}-I_m) \} \ge 0$ 
    from the second equality constraint in \eqref{eq:ReDLMI_DHD}. 
    This implies $h_2^{T}Dh_2 \ge h_2^{T}h_2$, 
    which doesn't hold for $h_2 \ne 0$ since we assumed $\|D\|<1$. 
    Therefore $h_1 \ne 0$.
    
    \noindent
    \underline{proof of (ii)} \ 
    We first prove $\phi_{\mrm{wc}}:\bb{R}\to\bb{R}$ such that
    $\phi_{\mrm{wc}}(z_{i}^\ast)=w_{i}^{\ast} \ (i=1,\ldots,m)$ is well-defined. 
    To this end, it suffices to prove that 
    if $z_{i}^{\ast}=z_{j}^{\ast}$ then $w_{i}^{\ast}=w_{j}^{\ast} \ (i \ne j)$. 
    To prove this assertion, we note 
    that the second equality constraint in \eqref{eq:ReDLMI_DHD} 
    can be rewritten equivalently as 
    \begin{align}
        w^{\ast}(z^{\ast}-w^{\ast})^{T}=\1f^{T}+g\1^{T}+X.
        \label{eq:proof_DHD_well-defined}
    \end{align}
    Then, if $z_i^{\ast}=z_j^{\ast} \ (i \ne j)$,  
    we have from \eqref{eq:proof_DHD_well-defined} that  
    \begin{align}
        \scalebox{0.95}{$
            \begin{aligned}
                -(w_i^\ast-w_j^\ast)^2
                &
                =(e_i-e_j)^{T}w^{\ast}(z^{\ast}-w^{\ast})^{T}(e_i-e_j) \\
                &
                =(e_i-e_j)^{T}(\1f^{T}+g\1^{T}+X)(e_i-e_j) \ge 0.
            \end{aligned}
            $}
        \notag
    \end{align}
    where $e_i \ (i=1,\ldots,m)$ are the standard basis. 
    This clearly shows $w_{i}^{\ast}=w_{j}^{\ast} \ (i \ne j)$. 
    
    We finally prove the existence of $\phi_{\mrm{wc}}\in\slope{0}{1}$ such that  
    $\phi_{\mrm{wc}}(z_{i}^\ast)=w_{i}^{\ast} \ (i=1,\ldots,m)$. 
    To this end, we need to prove that if 
    $z_{i}^{\ast}=0$ then $w_{i}^{\ast}=0$ from Definition~\ref{de:slope-restricted}.   
    This can be verified from \eqref{eq:proof_DHD_well-defined} 
    since if $z_{i}^{\ast}=0$ then we have 
    \begin{multline}
        -w_i^{*2}=e_i^{T}w^{\ast}(z^{\ast}-w^{\ast})^{T}e_i\\
        =e_i^{T}(\1f^{T}+g\1^{T}+X)e_i \ge 0.  
        \notag
    \end{multline}
    This clearly shows $w_i^{\ast}=0$.  In addition, again from 
    \eqref{eq:proof_DHD_well-defined}, we have 
    \begin{align}
        \begin{aligned}
            &
            (w_i^{\ast}-w_j^{\ast})((z_i^{\ast}-z_j^{\ast})-(w_i^{\ast}-w_j^{\ast})) \\
            &
            =(e_i-e_j)^{T}w^{\ast}(z^{\ast}-w^{\ast})^{T}(e_i-e_j) \\
            &
            =(e_i-e_j)^{T}(\1f^{T}+g\1^{T}+X)(e_i-e_j) \ge 0.
        \end{aligned}
        \notag
    \end{align}
    Therefore, if $z_i^{\ast} \ne z_j^{\ast} \ (i \ne j)$, we have 
    \begin{align}
        \frac{w_i^{\ast}-w_j^{\ast}}{z_i^{\ast}-z_j^{\ast}} 
        \left(1-\frac{w_i^{\ast}-w_j^{\ast}}{z_i^{\ast}- z_j^{\ast}} \right) \ge 0.
        \label{eq:proof_DHD_slope_1}
    \end{align}
    %
    From \eqref{eq:proof_DHD_well-defined}, we also have 
    \begin{align*}
        \begin{aligned}
            &
            w_i^{\ast}(z_i^{\ast}-w_i^{\ast}) \\
            &
            =e_i^{T}w^{\ast}(z^{\ast}-w^{\ast})^{T}e_i \\
            &
            =e_i^{T}(\1f^{T}+g\1^{T}+X)e_i \ge 0.
        \end{aligned}
    \end{align*}
    Therefore, if $z_i^{\ast} \ne 0 \ (i=1,\ldots,m)$, we have 
    \begin{align}
        \frac{w_i^{\ast}}{z_i^{\ast}} \left( 1-\frac{w_i^{\ast}}{z_i^{\ast}} \right) \ge 0.
        \label{eq:proof_DHD_slope_2}
    \end{align}
    We then conclude that 
    \eqref{eq:proof_DHD_slope_1} and \eqref{eq:proof_DHD_slope_2} 
     clearly show the existence of  $\phi_\mrm{wc}\in\slope{0}{1}$ such that 
    $\phi_\mrm{wc}(z^\ast_i)=w^\ast_i\ (i=1,\cdots,m)$.  
        
    \noindent
    \underline{proof of (iii)} \ 
    It suffices to prove 
    that $x(t)=h_1$, $z(t)=z^\ast$, $w(t)=w^\ast$ $(t\ge 0)$ satisfy \eqref{eq:systemG} and  \eqref{eq:nonlinear} 
    in the case $\Phi=\Phi_{\mrm{wc}}$ and $x(0)=h_1$.
    From the first equality constraint in \eqref{eq:ReDLMI_DHD}, we have 
    \begin{align}
        \He\{ Ah_1h_1^{T}+Bh_2h_1^{T} \} = \He\{ (Ah_1+Bh_2)h_1^{T} \} = 0.  
        \notag
    \end{align}
    Since $h_1 \ne 0$ as proved, we see $Ah_1+Bh_2=0$ from \cite{green}. 
    Therefore, we can obtain 
    \begin{align}
        \begin{aligned}
            &
            \frac{d}{dt}h_1=0=Ah_1+Bh_2=Ah_1+Bw^\ast, \\
            &
            z(t)=z^\ast=Ch_1+Dh_2=Ch_1+Dw^\ast, \\
            &
            w(t)=w^\ast=\Phi_\mrm{wc}(z^\ast).
        \end{aligned}
        \notag
    \end{align}
    This completes the proof.
\end{proof}

\subsection{Concrete Construction of Destabilizing Nonlinearity}
From Theorem \ref{th:DHD}, we see that 
any $\phi\in\slope{0}{1}$ with $\phi(z_i^{\ast})=w_i^{\ast} \ (i=1,\ldots,m)$ is 
a destabilizing nonlinearity. 
One of such destabilizing nonlinear (piecewise linear) operators
can be constructed by following the next procedure:
\begin{enumerate}
    \item Define the set $\mathcal{Z}_0 := \{0, z_1^{\ast}, z_2^{\ast}, \ldots, z_m^{\ast}\}$ 
    (by leaving only one if they have duplicates)
    and compose the series $\bar{z}_1,\bar{z}_2, \ldots, \bar{z}_{l}$, $l=\lvert \mathcal{Z}_0\rvert$, 
    such that each $\bar{z}_i\ (1 \le i \le l)$ is the $i$-th smallest value of $\mathcal{Z}_0$. 
    Similarly, define the series $\bar{w}_1,\bar{w}_2,\ldots,\bar{w}_{l}$ for the set $\{0, w_1^{\ast}, w_2^{\ast}, \ldots, w_m^{\ast} \}$. 
    \item 
    Define $\phi_{\mrm{wc}}$ as follows:
    \begin{align}\hspace{-10pt}
        &\phi_{\mrm{wc}}(z)\notag\\
        &{}= 
        \begin{cases}
            \bar{w}_1, &z < \bar{z}_1, \\\displaystyle
            \frac{\bar{w}_{i+1}-\bar{w}_i}{\bar{z}_{i+1}-\bar{z}_i}(z-\bar{z}_i)+w_i, 
            &\parbox[t]{.4\linewidth}{
                $\bar{z}_i \le z < \bar{z}_{i+1}$\\
                $(i = 1, \ldots, l-1)$,
            }\\ 
            \bar{w}_{l}, &\bar{z}_{l} \le z. 
        \end{cases}
        \notag
    \end{align}%
\end{enumerate}
\subsection{Numerical Examples}
We demonstrate the soundness of Theorem \ref{th:DHD}.
Let us consider the case where the coefficient matrices in \eqref{eq:systemG} are given by
\begin{align}
    \scalebox{0.9}{$
        \begin{aligned}
            A=\begin{bmatrix*}[r]
                -2.11 &  0.94 \\
                0.77 & -0.46 \\
            \end{bmatrix*},\ 
            &B=\begin{bmatrix*}[r]
                0.28 & -0.85 & -0.94 & -0.71 \\
                -0.82 & -0.64 &  0.45 &  0.27 \\
            \end{bmatrix*}, \\
            C=\begin{bmatrix*}[r]
                0.58 & -0.39 \\
                0.13 & -0.36 \\
                -0.25 &  0.57 \\
                0.64 &  0.01 \\
            \end{bmatrix*},\ 
            &D=\begin{bmatrix*}[r]
                -0.24 & -0.28 &  0.32 & -0.04 \\
                0.23 & -0.42 &  0.24 &  0.29 \\
                -0.34 & -0.43 &  0.26 & -0.08 \\
                0.42 &  0.27 &  0.46 &  0.45 \\
            \end{bmatrix*}.
        \end{aligned}
        $}
    \notag
\end{align}
For this system, the dual LMI \eqref{eq:ReDLMI_DHD} turns out to be feasible, 
and the resulting dual solution $H$ is numerically verified to be $\rank{H}=1$. 
The full-rank factorization of $H$ as well as $z^{\ast},w^{\ast}\in\bb{R}^4$ 
in Theorem \ref{th:DHD} are obtained as
\begin{align}
    \scalebox{0.7}{$
        h_1=
        \begin{bmatrix*}[r]
            0.7321\\
            1.5697\\
        \end{bmatrix*},\ 
        h_2=
        \begin{bmatrix*}[r]
            -0.0907\\
            -0.1234\\
            0.0111\\
            0.0000\\
        \end{bmatrix*},\ 
        z^\ast=
        \begin{bmatrix*}[r]
            -0.1277\\
            -0.4363\\
            0.7985\\
            0.4179\\
        \end{bmatrix*},\ 
        w^\ast=
        \begin{bmatrix*}[r]
            -0.0907\\
            -0.1234\\
            0.0111\\
            0.0000\\
        \end{bmatrix*}(=h_2).  
        $}
    \notag
\end{align}
It is clear that $h_1 \ne 0$ (the assertion (i) of Theorem \ref{th:DHD}). 
Fig. \ref{fig:phi_wc_DHD} shows the input-output map 
of the destabilizing nonlinear (piecewise linear) operator $\phi_{\mrm{wc}}$ 
constructed by following the procedure in the preceding subsection. 
From Fig. \ref{fig:phi_wc_DHD}, 
we can readily see $\phi_{\mrm{wc}}\in\slope{0}{1}$ (the assertion (ii)). 
Fig. \ref{fig:state_trajectory_DHD} shows the vector field 
of the system $\Sigma$ with the nonlinearity $\Phi=\Phi_{\mrm{wc}}$, 
together with the state trajectories from initial states $x(0)=h_1$ and $x(0)=[-0.5 \ -0.5]^T$.
The state trajectory from the initial state $x(0)=[-0.5 \ -0.5]^T$ converges to the origin. 
However, as proved in Theorem \ref{th:DHD}, 
the state trajectory from the initial state $x(0)=h_1$ does not evolve and 
$x(0)=h_1$ is confirmed to be an equilibrium point of the system $\Sigma$ (the assertion (iii)).
\begin{figure}[tbp]
    \centering
    \vspace*{-5mm}
    \includegraphics[scale=0.6]{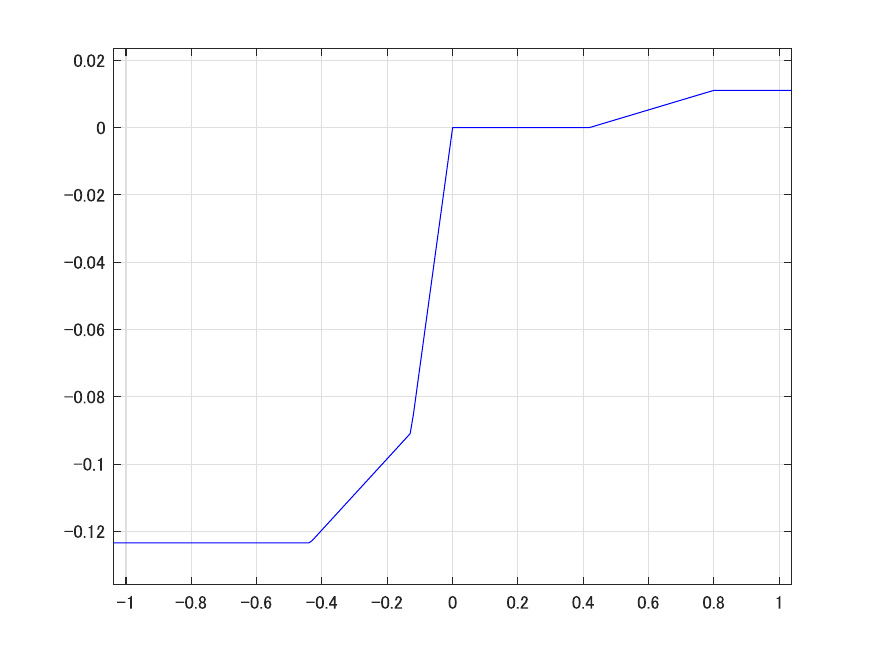}
    \vspace*{-10mm}
    \caption{The input-output map of the detected $\phi_{\mrm{wc}}$.}
    \label{fig:phi_wc_DHD}
    \centering
    \includegraphics[scale=0.6]{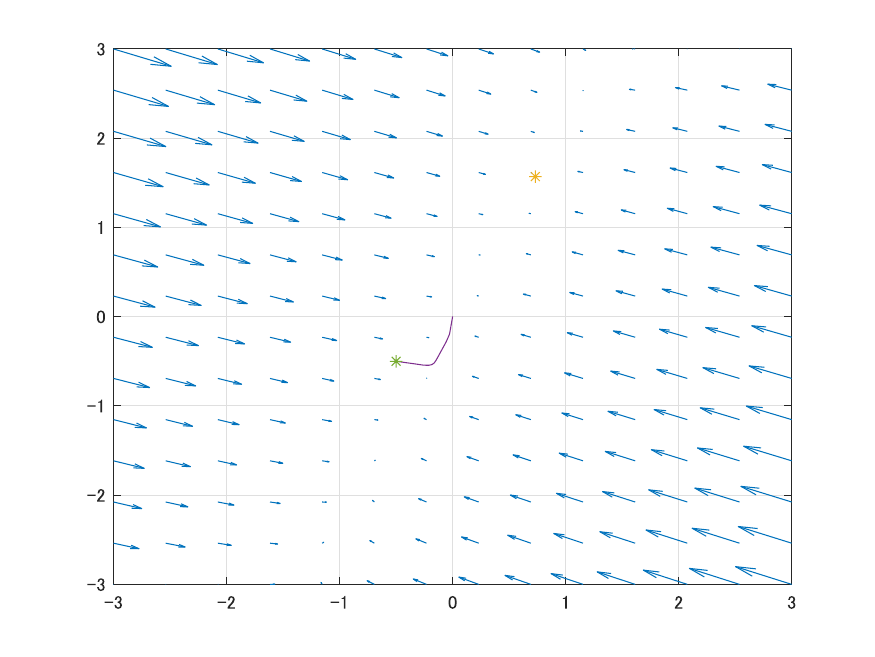}
    \vspace*{-10mm}
    \caption{The state trajectories with initial conditions $x(0)=h_1$ and $x(0)=[-0.5\ -0.5]^T$.}
    \label{fig:state_trajectory_DHD}
    \vspace*{-4mm}
\end{figure}
%

\section{Main Results for Slope-Restricted and Odd Nonlinearity
$\Phi\in\mbf{\Phi}_{\mu,\nu}^{m}\cap\mbf{\Phi}_{\mrm{odd}}^{m}$}
\subsection{LMIs Ensuring Absolute Stability and Their Dual}
We can readily obtain the next lemma 
from Proposition \ref{pr:IQC_basic} and Lemma \ref{lemma:OZF_basic}. 
\begin{Lemma}\label{lemma:DD}
    Let $\mu \le 0 \le \nu$. 
    Then, the system $\Sigma$ consisting of \eqref{eq:systemG} and \eqref{eq:nonlinear} 
    is absolutely stable for $\Phi\in\mbf{\Phi}_{\mu,\nu}^{m}\cap\mbf{\Phi}_{\mrm{odd}}^m$ 
    if there exist $P\in\bb{S}_{++}^n$ and $M\in\bb{DD}^m$ such that 
    \begin{align}
    \scalebox{0.95}{$
     \begin{aligned}
        \PLMIfirst + \PLMIDHDsecond \prec 0.  
        \label{eq:lem_DD}
     \end{aligned}$}
    \end{align}
\end{Lemma}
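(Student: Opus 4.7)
The plan is to observe that Lemma~\ref{lemma:DD} follows as an immediate corollary of Proposition~\ref{pr:IQC_basic} and Lemma~\ref{lemma:OZF_basic}, in exactly the same pattern that was used to derive Lemma~\ref{lemma:DHD}; only the multiplier set is enlarged from $\bb{DHD}^m$ to $\bb{DD}^m$ at the cost of restricting $\Phi$ to be odd.

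First, I would fix any $M\in\bb{DD}^m$ satisfying \eqref{eq:lem_DD} together with the corresponding $P\succ 0$, and set $\Pi:=\cl{M}(M,\mu,\nu)$. By the second (``odd'') part of Lemma~\ref{lemma:OZF_basic}, for every $\Phi\in\mbf{\Phi}_{\mu,\nu}^m\cap\mbf{\Phi}_{\mrm{odd}}^m$ one has
\begin{align*}
(\ast)^T\,\Pi\,\begin{bmatrix}\zeta\\ \Phi(\zeta)\end{bmatrix}\ge 0 \qquad \forall\,\zeta\in\bb{R}^m,
\end{align*}
so that $\Pi\in\mathbf{\Pi}^{\star}$ in the sense of \eqref{eq:Pi_star}, \emph{uniformly} over the entire uncertainty class. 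This is the single place where the odd hypothesis is exploited, and it is precisely what allows $M$ to range over $\bb{DD}^m$ rather than only $\bb{DHD}^m$.

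Next, I would observe that the LMI \eqref{eq:lem_DD} is literally the hypothesis \eqref{eq:PLMI_basic} of Proposition~\ref{pr:IQC_basic} when that proposition is instantiated with the above $(P,\Pi)$. Hence, for each individual $\Phi\in\mbf{\Phi}_{\mu,\nu}^m\cap\mbf{\Phi}_{\mrm{odd}}^m$, Proposition~\ref{pr:IQC_basic} yields that the feedback system $\Sigma$ is stable, i.e.\ its origin is globally asymptotically stable. Because the very same pair $(P,\Pi)$ serves as a certificate uniformly across the whole class, this is exactly the definition of absolute stability for $\Phi\in\mbf{\Phi}_{\mu,\nu}^m\cap\mbf{\Phi}_{\mrm{odd}}^m$, which is the desired conclusion.

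There is essentially no technical obstacle; the derivation is a routine specialization of the machinery already collected in Proposition~\ref{pr:IQC_basic} and Lemma~\ref{lemma:OZF_basic}. The only subtle point worth flagging in the write-up is that the pointwise IQC furnished by Lemma~\ref{lemma:OZF_basic} must be verified to hold for \emph{every} admissible $\Phi$ with the \emph{same} multiplier $\Pi$; this is immediate from the statement of Lemma~\ref{lemma:OZF_basic}, so that the stability conclusion of Proposition~\ref{pr:IQC_basic} upgrades from pointwise-in-$\Phi$ stability to absolute stability over the whole slope-restricted odd class.
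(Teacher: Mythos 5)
Your proposal is correct and follows exactly the route the paper intends: the paper gives no explicit proof of Lemma~\ref{lemma:DD}, stating only that it is ``readily obtained'' from Proposition~\ref{pr:IQC_basic} and the odd-nonlinearity part of Lemma~\ref{lemma:OZF_basic}, which is precisely the instantiation $\Pi=\cl{M}(M,\mu,\nu)$ with $M\in\bb{DD}^m$ that you spell out. Your added remark that the single certificate $(P,\Pi)$ works uniformly over the whole class, upgrading stability to absolute stability, is a correct and worthwhile explicitation of what the paper leaves implicit.
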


For the same reason as the slope-restricted case, 
we derived the dual of LMI \eqref{eq:lem_DD}. 
We first note that LMI \eqref{eq:lem_DD} can be rewritten, equivalently, as follows: \\
\tbf{Primal LMI (For $\Phi\in\mbf{\Phi}_{\mu,\nu}^{m}\cap\mbf{\Phi}_{\mrm{odd}}^{m}$)} \\
Find $P\in\bb{S}_{++}^n$, $M_{\mrm{d}}\in\bb{D}^m$, 
and $M_{\mrm{od}},\overline{M}_{\mrm{od}}\in\bb{OD}^m$ such that 
\begin{align}
    \scalebox{0.85}{$
        \begin{aligned}
            &
            \PLMIfirst + \PLMIDDsecond \prec 0, \hspace*{-10mm}\\
            &
            (M_{\mrm{d}}-\overline{M}_{\mrm{od}})\1\ge0, \ \1^{T}(M_{\mrm{d}}-\overline{M}_{\mrm{od}})\ge0, 
            \\ &
            \overline{M}_{\mrm{od}}-M_{\mrm{od}}\ge0, \ \overline{M}_{\mrm{od}}+M_{\mrm{od}}\ge0.   
        \end{aligned}
        $}
    \label{eq:PLMI_DD}
\end{align}

Then, for the Lagrange dual variables 
$H\in\bb{S}_{+}^{n+m}$, $f,g\in\bb{R}_{+}^m$, and $X,Z\in\bb{Z}_{0}^m$, 
the Lagrangian can be defined as 
\begin{align}
    \scalebox{0.76}{$
        \begin{aligned}
            &
            \cl{L}(P,M_{\mrm{d}},M_{\mrm{od}},\overline{M}_{\mrm{od}},H,f,g,X,Z) \\
            &
            :=
            \trace\left(\left( \PLMIfirst + \PLMIDDsecond \right)H\right)  \\
            & \quad
            -2f^{T}(M_{\mrm{d}}-\overline{M}_{\mrm{od}})\1 
             -2\1^{T}(M_{\mrm{d}}-\overline{M}_{\mrm{od}})g 
            \\ & \quad
            +2\trace((\overline{M}_{\mrm{od}}-M_{\mrm{od}})X)
            +2\trace((M_{\mrm{od}}+\overline{M}_{\mrm{od}})Z) \\
            &
            =\trace\{P\He\{AH_{11}+BH_{12}^T\}\} \\
            & \quad
            +2\trace(M_{\mrm{d}}(Y-\1f^{T}-g\1^{T}))\\
            & \quad
            +2\trace(M_{\mrm{od}}(Y-X+Z))\\
            & \quad
            +2\trace(\overline{M}_{\mrm{od}}(\1f^{T}+g\1^{T}+X+Z))
        \end{aligned}
        $}
    \notag
\end{align}
where
\begin{align}
    \begin{aligned}
        &
        H=:\begin{bmatrix}
            H_{11} & H_{12} \\
            H_{12}^T & H_{22}
        \end{bmatrix},\ H_{11}\in\bb{S}_{+}^n,\ H_{22}\in\bb{S}_{+}^m, \\
        &
        Y:=\matrixY.
    \end{aligned}
    \notag
\end{align}
For $\cl{L}(P,M_{\mrm{d}},M_{\mrm{od}},\overline{M}_{\mrm{od}},H,f,g,X,Z) \ge 0$ 
to hold for any $P\in\bb{S}_{++}^n$, $M_{\mrm{d}}\in\bb{D}^m$, 
and $M_{\mrm{od}},\overline{M}_{\mrm{od}}\in\bb{OD}^m$, we see 
\begin{align}
    \begin{aligned}
        &
        \He\{AH_{11}+BH_{12}^T\} \succeq 0, \\
        &
        \projection{d}{Y}=\projection{d}{\1f^{T}+g\1^T}, \\
        &
        \projection{od}{Y}=\projection{od}{X-Z}, \\
        &
        \projection{od}{X+Z}=-\projection{od}{\1f^{T}+g\1^{T}}.
    \end{aligned}
    \notag
\end{align}
We thus arrive at the next result. \\
\tbf{Dual LMI (For $\Phi\in\mbf{\Phi}_{\mu,\nu}^{m}\cap\mbf{\Phi}_{\mrm{odd}}^{m}$)} \\
Find $H\in\bb{S}_{+}^{n+m}$, $f,g\in\bb{R}_{+}^m$, $X,Z\in\bb{Z}_{0}^m$, not all zeros, 
such that 
\begin{align}
    \scalebox{0.9}{$
        \begin{aligned}
            &
            \He\{AH_{11}+BH_{12}^T\} \succeq 0, \\
            &
            \begin{multlined}
                \projection{d}{\matrixY}\\
                =\projection{d}{\1f^{T}+g\1^T}, 
            \end{multlined}\\
            &
            \begin{multlined}
                \projection{od}{\matrixY}\\
                =\projection{od}{X-Z}, 
            \end{multlined}\\
            & 
            \projection{od}{X+Z}=-\projection{od}{\1f^{T}+g\1^{T}}. 
        \end{aligned}
        $}
    \label{eq:DLMI_DD}
\end{align}
%

\subsection{Extraction of Destabilizing Nonlinearity by Dual LMIs}
Suppose $\mu=0$, $\nu=1$, and $\|D\|<1$ due to the same reasons 
as the slope-restricted case. 
Then, the primal LMI \eqref{eq:PLMI_DD} and the dual LMI \eqref{eq:DLMI_DD} 
reduce respectively to: \\
\tbf{Primal LMI (For $\Phi\in\mbf{\Phi}_{0,1}^{m}\cap\mbf{\Phi}_{\mrm{odd}}^{m}$)} \\
Find $P\in\bb{S}^n$, $M_{\mrm{d}}\in\bb{D}^m$, 
and $M_{\mrm{od}},\overline{M}_{\mrm{od}}\in\bb{OD}^m$ such that 
\begin{align}
    \scalebox{0.85}{$
        \begin{aligned}
            &
            \PLMIfirst + \RePLMIDDsecond \prec 0, \hspace*{-10mm}\\
            &
            (M_{\mrm{d}}-\overline{M}_{\mrm{od}})\1\ge0, \ \1^{T}(M_{\mrm{d}}-\overline{M}_{\mrm{od}})\ge0, 
            \\ &
            \overline{M}_{\mrm{od}}-M_{\mrm{od}}\ge0, \ \overline{M}_{\mrm{od}}+M_{\mrm{od}}\ge0.
        \end{aligned}
        $}
    \label{eq:RePLMI_DD}
\end{align}
\\
\tbf{Dual LMI (For $\Phi\in\mbf{\Phi}_{0,1}^{m}\cap\mbf{\Phi}_{\mrm{odd}}^{m}$)}\\
Find $H\in\bb{S}_{+}^{n+m}$, $f,g\in\bb{R}_{+}^m$, $X,Z\in\bb{Z}_{0}^m$, not all zeros, 
such that 
\begin{align}
    \scalebox{0.9}{$
        \begin{aligned}
            &
            \He\{AH_{11}+BH_{12}^T\} = 0, \\
            &
            \begin{multlined}
                \projection{d}{\RematrixY}
                =\projection{d}{\1f^{T}+g\1^T}, 
            \end{multlined}\\
            &
            \begin{multlined}
                \projection{od}{\RematrixY}
                =\projection{od}{X-Z}, 
            \end{multlined}\\
            &
            \projection{od}{X+Z}=-\projection{od}{\1f^{T}+g\1^{T}}. 
        \end{aligned}
        $} \
    \label{eq:ReDLMI_DD}
\end{align}

Regarding this dual LMI \eqref{eq:ReDLMI_DD}, 
we can obtain the next main result for the slope-restricted and odd nonlinearity case.
\begin{Theorem}\label{th:DD}
    Suppose the dual LMI \eqref{eq:ReDLMI_DD} is feasible 
    and has a solution $H\in\bb{S}_{+}^{n+m}$ of $\rank{H}=1$ given by 
    \begin{align}
        H=\begin{bmatrix}
            h_1 \\
            h_2
        \end{bmatrix} \begin{bmatrix}
            h_1 \\
            h_2
        \end{bmatrix}^T, h_1\in\bb{R}^n, h_2\in\bb{R}^m.
        \notag
    \end{align}
    Then, the following assertions hold: 
    \begin{enumerate}\renewcommand{\labelenumi}{(\roman{enumi})}
        \item $h_1 \ne 0$. 
        \item Let us define $z^{\ast}:=Ch_1+Dh_2$ and $w^{\ast}:=h_2$. 
        Then, there exists an odd nonlinearity $\phi_{\mrm{wc}}\in\slope{0}{1}$ 
        such that 
        $\phi_{\mrm{wc}}(z_{i}^\ast)=w_{i}^{\ast}$ and 
        $\phi_{\mrm{wc}}(-z_{i}^\ast)=-w_{i}^{\ast} \ (i=1,\ldots,m)$.
        \item Let us define $\Phi_{\mrm{wc}}:=\mrm{diag}_m(\phi_{\mrm{wc}})$. 
        Then, the feedback system $\Sigma$ with
        the nonlinearity $\Phi=\Phi_{\mrm{wc}}\in\mbf{\Phi}_{0,1}^m\cap\mbf{\Phi}_{\mrm{odd}}^m$ 
        is not stable.
        In particular, the state $x$ of the system $\Sigma$ 
        corresponding to the initial state $x(0)=h_1$ is given by $x(t)=h_1 \ (t \ge 0)$.
    \end{enumerate}
\end{Theorem}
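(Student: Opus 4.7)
The plan is to mirror the three-part structure of Theorem~\ref{th:DHD}'s proof, with the oddness requirement handled by a second, complementary symmetric quadratic form that is unavailable in the $\mbf{\Phi}_{0,1}^m$-only setting. Under the rank-one factorization, set $Y^{\ast} := w^{\ast}(z^{\ast}-w^{\ast})^T$. The four dual equations of \eqref{eq:ReDLMI_DD} collapse to $Y^{\ast}_{ii} = f_i + g_i$, $Y^{\ast}_{ij} = X_{ij} - Z_{ij}$ (for $i \ne j$), and $X_{ij} + Z_{ij} = -(f_i + g_j)$. Since $X_{ij}, Z_{ij} \le 0$, these last two jointly yield the two-sided bound $|Y^{\ast}_{ij}| \le f_i + g_j$, which is the key new ingredient that the odd case adds on top of Theorem~\ref{th:DHD}.

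For assertion (i), I would first rule out $H = 0$: otherwise $Y^{\ast} = 0$ forces $f = g = 0$ by nonnegativity, and then $\cl{P}_{\mrm{od}}(X\pm Z) = 0$ yield $X = Z = 0$, contradicting the non-triviality requirement. Next, $h_1 = 0$ with $h_2 \ne 0$ would give $z^{\ast} = Dh_2$; summing the diagonal relation $w_i^{\ast}(z_i^{\ast}-w_i^{\ast})=f_i+g_i\ge 0$ over $i$ leads to $h_2^T D h_2 \ge h_2^T h_2$, contradicting $\|D\| < 1$, exactly as in Theorem~\ref{th:DHD}.

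For assertion (ii), the core step is to evaluate $(e_i \pm e_j)^T Y^{\ast} (e_i \pm e_j)$ in two ways, obtaining
\begin{align*}
    (w_i^{\ast} - w_j^{\ast})\bigl((z_i^{\ast} - z_j^{\ast}) - (w_i^{\ast} - w_j^{\ast})\bigr) &\ge 0, \\
    (w_i^{\ast} + w_j^{\ast})\bigl((z_i^{\ast} + z_j^{\ast}) - (w_i^{\ast} + w_j^{\ast})\bigr) &\ge 0.
\end{align*}
The first is the same inequality that appears in Theorem~\ref{th:DHD}; the second relies on the lower bound $Y^{\ast}_{ij} + Y^{\ast}_{ji} \ge -(f_i + g_j) - (f_j + g_i)$ coming from $|Y^{\ast}_{ij}| \le f_i + g_j$. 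Together with $Y^{\ast}_{ii} \ge 0$, these inequalities enforce the consistency conditions $z_i^{\ast} = \pm z_j^{\ast} \Rightarrow w_i^{\ast} = \pm w_j^{\ast}$ and $z_i^{\ast} = 0 \Rightarrow w_i^{\ast} = 0$, and place every pairwise slope of the symmetric data set $\{(z_i^{\ast}, w_i^{\ast})\} \cup \{(-z_i^{\ast}, -w_i^{\ast})\} \cup \{(0,0)\}$ in $[0,1]$. The piecewise-linear interpolant through this set, clamped outside the extreme knots, then gives the desired odd $\phi_{\mrm{wc}} \in \slope{0}{1}$ with $\phi_{\mrm{wc}}(\pm z_i^{\ast}) = \pm w_i^{\ast}$.

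Assertion (iii) is verbatim the Theorem~\ref{th:DHD} argument: $\He\{(Ah_1 + Bh_2)h_1^T\} = 0$ combined with $h_1 \ne 0$ gives $Ah_1 + Bh_2 = 0$, so the constant trajectory $x(t) \equiv h_1$, $w(t) \equiv w^{\ast}$, $z(t) \equiv z^{\ast}$ satisfies \eqref{eq:systemG}--\eqref{eq:nonlinear} under $\Phi = \Phi_{\mrm{wc}}$ with $x(0) = h_1$. The main obstacle I anticipate is precisely the odd-pair inequality $(e_i + e_j)^T Y^{\ast} (e_i + e_j) \ge 0$: the sign bookkeeping required to extract $|Y^{\ast}_{ij}| \le f_i + g_j$ from the four dual equations---and thereby to confirm that the auxiliary splitting of $M$ into $M_{\mrm{d}},M_{\mrm{od}},\overline{M}_{\mrm{od}}$ in the primal \eqref{eq:RePLMI_DD} really does buy the odd slope condition on the dual side---is essentially the only genuinely new analytical content compared with Theorem~\ref{th:DHD}.
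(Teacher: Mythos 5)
Your proposal is correct and follows essentially the same route as the paper: the same contradiction arguments for (i), the same evaluation of $(e_i\pm e_j)^T w^{\ast}(z^{\ast}-w^{\ast})^T(e_i\pm e_j)$ using $X_{ij},Z_{ij}\le 0$ together with the fourth dual equation to get both the difference and sum slope inequalities, and the same piecewise-linear odd interpolant. The only nit is an index transposition in your bound (it should read $|Y^{\ast}_{ij}|\le f_j+g_i$ since $(\1 f^T+g\1^T)_{ij}=f_j+g_i$), which is harmless because only the symmetrized sum $Y^{\ast}_{ij}+Y^{\ast}_{ji}\ge -(f_i+g_j)-(f_j+g_i)$ is used.
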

\begin{proof}
    \underline{proof of (i)} \ 
    First, we prove $H \ne 0$ to prove that $\rank{H}=1$ can be happen. 
    To this end, suppose $H=0$ for contradiction. 
    Then, from the second equality constraint in \eqref{eq:ReDLMI_DD}, 
    we see $f=0$ and $g=0$ 
    and from the fourth equality constraint in \eqref{eq:ReDLMI_DD}, 
    we see $X=0$ and $Z=0$,  
    which contradicts the requirements that $H,f,g,X,Z$ are not all zeros.
    Therefore $H \ne 0$. 
    Next, we suppose $h_1=0$ for contradiction. 
    Then, $h_2 \ne 0$ since $\rank{H}=1$.  
    We see $\trace\{ h_2h_2^{T}(D^{T}-I_m) \} \ge 0$ 
    from the second equality constraint in \eqref{eq:ReDLMI_DD}. 
    This implies $h_2^{T}Dh_2 \ge h_2^{T}h_2$, 
    which doesn't hold for $h_2 \ne 0$ since we assumed $\|D\|<1$. 
    Therefore $h_1 \ne 0$. 
    
    \noindent
    \underline{proof of (ii)} \ 
    We first prove that odd $\phi_{\mrm{wc}}:\bb{R}\to\bb{R}$ 
    such that 
    $\phi_{\mrm{wc}}(z_{i}^\ast)=w_{i}^{\ast}$ and 
    $\phi_{\mrm{wc}}(-z_{i}^\ast)=-w_{i}^{\ast} \ (i=1,\ldots,m)$
    is well-defined. 
    To this end, it suffices to prove that 
    (iia) if $z_{i}^{\ast}=0$ then $w_{i}^{\ast}=0$, 
    (iib) if $z_{i}^{\ast}=z_{j}^{\ast}$ then $w_{i}^{\ast}=w_{j}^{\ast} \ (i \ne j)$, 
    and (iic) if $z_{i}^{\ast}=-z_{j}^{\ast}$ then $w_{i}^{\ast}=-w_{j}^{\ast} \ (i \ne j)$. 
    To prove these assertions, we note 
    that the second and third equality constraints in \eqref{eq:ReDLMI_DD} 
    can be rewritten, equivalently and respectively, as 
    \begin{align}
        &
        \projection{d}{w^{\ast}(z^{\ast}-w^{\ast})^T}=\projection{d}{\1f^{T}+g\1^T},
        \label{eq:proof_DD_well-defined_1} \\
        &
        \projection{od}{w^{\ast}(z^{\ast}-w^{\ast})^T}=\projection{od}{X-Z}.
        \label{eq:proof_DD_well-defined_2}
    \end{align}

    To prove (iia), suppose $z_{i}^{\ast}=0$.  
    Then, we have from \eqref{eq:proof_DD_well-defined_1} that
    \begin{align}
        -w_i^{*2}=e_i^{T}w^{\ast}(z^{\ast}-w^{\ast})^{T}e_i=e_i^{T}(\1f^{T}+g\1^{T})e_i \ge 0.  
        \notag
    \end{align}
    This clearly shows $w_i^{\ast}=0$.
    To prove (iib), suppose $z_i^{\ast}=z_j^{\ast} \ (i \ne j)$. 
    Then, from \eqref{eq:proof_DD_well-defined_1}, \eqref{eq:proof_DD_well-defined_2}, 
     and the fourth equality constraint in \eqref{eq:ReDLMI_DD}, we have 
    \begin{align}
        \begin{aligned}
            &
            -(w_i^{\ast}-w_j^{\ast})^2 \\
            &
            =(e_i-e_j)^{T}w^{\ast}(z^{\ast}-w^{\ast})^{T}(e_i-e_j) \\
            &
            =e_i^{T}(\1f^{T}+g\1^{T})e_i + e_j^{T}(\1f^{T}+g\1^T)e_j \\
            & \quad
            -e_j^{T}(X-Z)e_i - e_i^{T}(X-Z)e_j \\
            &
            \ge e_i^{T}(\1f^{T}+g\1^T)e_i + e_j^{T}(\1f^{T}+g\1^T)e_j \\
            & \quad
            +e_j^{T}(X+Z)e_i + e_i^{T}(X+Z)e_j \\
            &
            =e_i^{T}(\1f^{T}+g\1^T)e_i + e_j^{T}(\1f^{T}+g\1^T)e_j \\
            & \quad
            -e_j^{T}(\1f^{T}+g\1^T)e_i - e_i^{T}(\1f^{T}+g\1^T)e_j \\
            &
            =f_i+g_i+f_j+g_j-(f_i+g_j)-(f_j+g_i)=0.
        \end{aligned}
        \label{eq:proof_DD_well-defined_3}
    \end{align}
    This clearly shows $w_{i}^{\ast}=w_{j}^{\ast}$. 
    To prove (iic), suppose $z_i^{\ast}=-z_j^{\ast} \ (i \ne j)$. 
    Then, similarly to the proof of (iib), we have 
    \begin{align}
        \begin{aligned}
            &
            -(w_i^{\ast}+w_j^{\ast})^2 \\
            &
            =(e_i+e_j)^{T}w^{\ast}(z^{\ast}-w^{\ast})^{T}(e_i+e_j) \\
            &
            =e_i^{T}(\1f^{T}+g\1^{T})e_i + e_j^{T}(\1f^{T}+g\1^T)e_j \\
            & \quad
            +e_j^{T}(X-Z)e_i + e_i^{T}(X-Z)e_j \\
            &
            \ge e_i^{T}(\1f^{T}+g\1^T)e_i + e_j^{T}(\1f^{T}+g\1^T)e_j \\
            & \quad
            +e_j^{T}(X+Z)e_i + e_i^{T}(X+Z)e_j = 0.
        \end{aligned}
        \label{eq:proof_DD_well-defined_4}
    \end{align}
    This clearly shows $w_{i}^{\ast}=-w_{j}^{\ast} \ (i \ne j)$.
    
    We finally prove the existence of odd $\phi_{\mrm{wc}}\in\slope{0}{1}$ such that
    $\phi_{\mrm{wc}}(z_{i}^\ast)=w_{i}^{\ast}$ and 
    $\phi_{\mrm{wc}}(-z_{i}^\ast)=-w_{i}^{\ast} \ (i=1,\ldots,m)$.   
    From \eqref{eq:proof_DD_well-defined_3}, we have 
    \begin{align}
        \begin{aligned}
            &
            (w_i^{\ast}-w_j^{\ast})((z_i^{\ast}-z_j^{\ast})-(w_i^{\ast}-w_j^{\ast})) \\
            &
            =(e_i-e_j)^{T}w^{\ast}(z^{\ast}-w^{\ast})^{T}(e_i-e_j) \ge 0. 
        \end{aligned}
        \notag
    \end{align}
    Therefore, if $z_i^{\ast} \ne z_j^{\ast} \ (i \ne j)$, we have 
    \begin{align}
        \frac{w_i^{\ast}-w_j^{\ast}}{z_i^{\ast}-z_j^{\ast}} 
        \left(1-\frac{w_i^{\ast}-w_j^{\ast}}{z_i^{\ast}-z_j^{\ast}} \right) \ge 0.
        \label{eq:proof_DD_slope_1}
    \end{align}
    From \eqref{eq:proof_DD_well-defined_4}, we also have 
    \begin{align}
        \begin{aligned}
            &
            (w_i^{\ast}+w_j^{\ast})((z_i^{\ast}+z_j^{\ast})-(w_i^{\ast}+w_j^{\ast})) \\
            &
            =(e_i+e_j)^{T}w^{\ast}(z^{\ast}-w^{\ast})^{T}(e_i+e_j) \ge 0. 
        \end{aligned}
        \notag
    \end{align}
    Therefore, if $z_i^{\ast} \ne -z_j^{\ast} \ (i \ne j)$, we have 
    \begin{align}
        \frac{w_i^{\ast}-(-w_j^{\ast})}{z_i^{\ast}-(-z_j^{\ast})} 
        \left(1-\frac{w_i^{\ast}-(-w_j^{\ast})}{z_i^{\ast}-(-z_j^{\ast})} \right) \ge 0.
        \label{eq:proof_DD_slope_2}
    \end{align}
    Moreover, from \eqref{eq:proof_DD_well-defined_1}, we have 
    \begin{align}
    \scalebox{0.9}{$
    \begin{aligned}
        w_i^{\ast}(z_i^{\ast}-w_i^{\ast})&=e_i^{T}w^{\ast}(z^{\ast}-w^{\ast})^{T}e_i
        =e_i^{T}(\1f^{T}+g\1^T)e_i 
        \ge 0.
    \end{aligned}$}
        \notag
    \end{align}
    Therefore, if $z_i^{\ast} \ne 0 \ (i=1,\ldots,m)$, we have
    \begin{align}
        \frac{w_i^{\ast}}{z_i^{\ast}} \left( 1-\frac{w_i^{\ast}}{z_i^{\ast}} \right) \ge 0. 
        \label{eq:proof_DD_slope_3}
    \end{align}
    We then conclude that 
    \eqref{eq:proof_DD_slope_1}, \eqref{eq:proof_DD_slope_2}, 
    and \eqref{eq:proof_DD_slope_3}
    clearly show the existence of  odd $\phi_\mrm{wc}\in\slope{0}{1}$ such that 
    $\phi_{\mrm{wc}}(z_{i}^\ast)=w_{i}^{\ast}$ and 
    $\phi_{\mrm{wc}}(-z_{i}^\ast)=-w_{i}^{\ast} \ (i=1,\ldots,m)$.   
    
    \noindent
    \underline{proof of (iii)} \ 
    Omitted since the proof is exactly the same as the proof of (iii) in Theorem \ref{th:DHD}.
\end{proof}

\subsection{Concrete Construction of Destabilizing Nonlinearity}
From Theorem \ref{th:DD}, we see that 
any odd $\phi\in\slope{0}{1}$ 
with $\phi(z_i^{\ast})=w_i^{\ast}, \phi(-z_i^{\ast})=-w_i^{\ast} \ (i=1,\ldots,m)$ is 
a destabilizing nonlinearity. 
One of such destabilizing nonlinear (piecewise linear) operator
can be constructed by following the next procedure:
\begin{enumerate}
    \item Define the set 
    \begin{align*}
        \mathcal{Z}_0 := \{0, z_1^{\ast}, -z_1^{\ast}, z_2^{\ast}, -z_2^{\ast}, \ldots, z_m^{\ast}, -z_m^{\ast}\}
    \end{align*}
     (by leaving only one if they have duplicates) 
    and compose the series $\bar{z}_1,\bar{z}_2, \ldots, \bar{z}_{l}$, $l=\lvert \mathcal{Z}_0\rvert$, 
    such that each $\bar{z}_i\ (1 \le i \le l)$ is the $i$-th smallest value of $\mathcal{Z}_0$. 
    Similarly, define the series $\bar{w}_1,\bar{w}_2,\ldots,\bar{w}_{l}$ for the set $\{0, w_1^{\ast}, -w_1^{\ast}, w_2^{\ast}, -w_2^{\ast}, \ldots, w_m^{\ast}, -w_m^{\ast} \}$. 
    \item 
    Define $\phi_{\mrm{wc}}$ as follows:
    \begin{align}\hspace{-10pt}
        &\phi_{\mrm{wc}}(z)\notag\\
        &{}= 
        \begin{cases}
            \bar{w}_1, &z \le \bar{z}_1, \\\displaystyle
            \frac{\bar{w}_{i+1}-\bar{w}_i}{\bar{z}_{i+1}-\bar{z}_i}(z-\bar{z}_i)+w_i, 
            &\parbox[t]{.4\linewidth}{
                $\bar{z}_i \le z \le \bar{z}_{i+1}$\\
                $(i = 1, \ldots, l-1)$,
            }\\ 
            \bar{w}_{l}, &\bar{z}_{l} \le z.
        \end{cases}
        \notag
    \end{align}
\end{enumerate}

\subsection{Numerical Examples}
\begin{figure}[tbp]
    \centering
    \vspace*{-5mm}
    \includegraphics[scale=0.6]{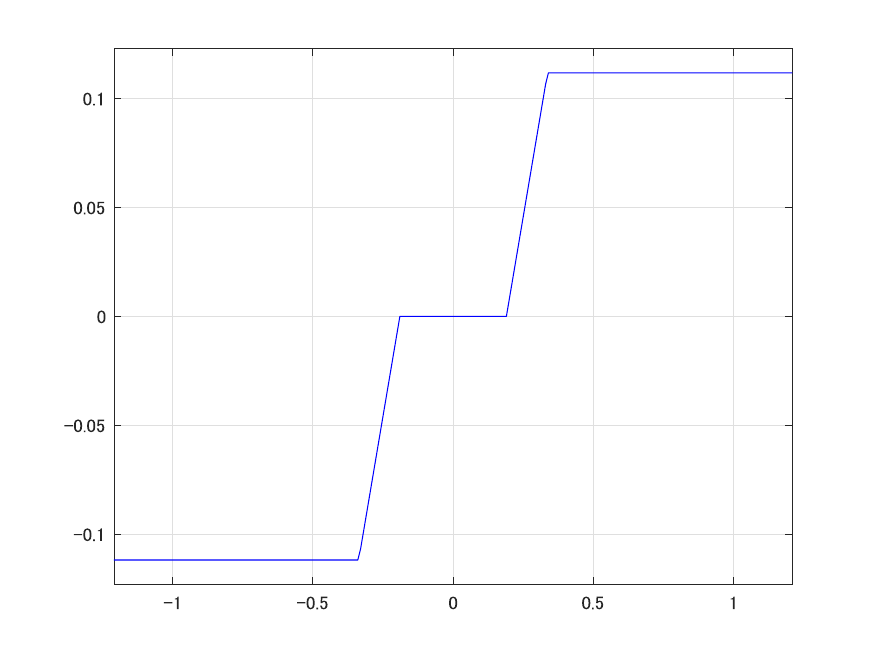}
    \vspace*{-10mm}
    \caption{The input-output map of the detected $\phi_{\mrm{wc}}$.}
    \label{fig:phi_wc_DD}
    \centering
    \includegraphics[scale=0.6]{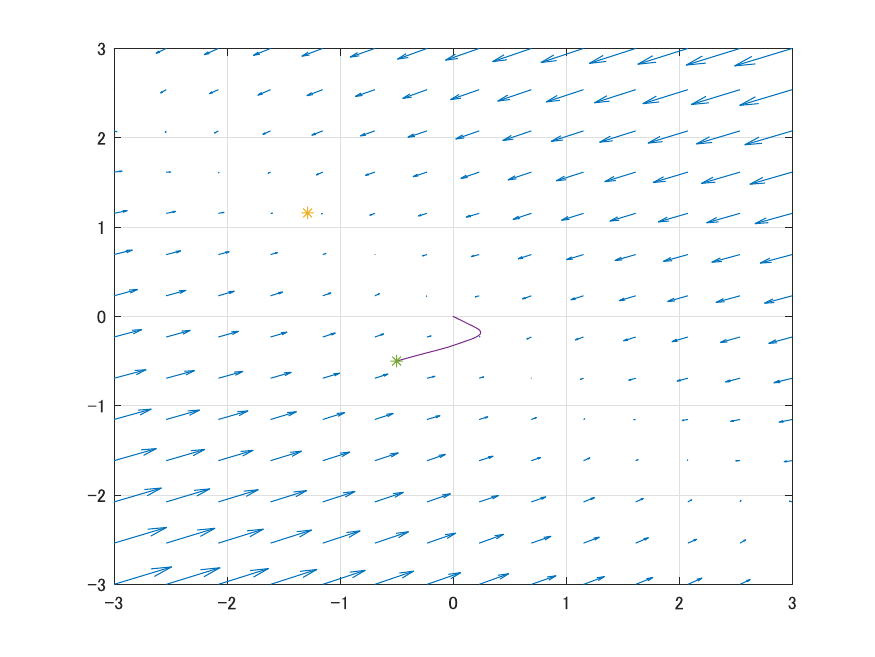}
    \vspace*{-10mm}
    \caption{The state trajectories with initial conditions $x(0)=h_1$ and $x(0)=[-0.5\ -0.5]^T$.}
    \vspace*{-4mm}
    \label{fig:state_trajectory_DD}
\end{figure}
We demonstrate the soundness of Theorem \ref{th:DD}.
Let us consider the case where the coefficient matrices in \eqref{eq:systemG} 
are given by 
\begin{align}
    \scalebox{0.9}{$
        \begin{aligned}
            A=\begin{bmatrix*}[r]
                -0.73 & -0.99 \\
                -0.21 & -0.44
            \end{bmatrix*},
            &
            B=\begin{bmatrix*}[r]
                -1.00 & -0.72 & -0.65 & 0.20 \\
                -0.62 & -0.46 & -0.72 & 0.80
            \end{bmatrix*}, \\
            C=\begin{bmatrix*}[r]
                0.88 & 0.05 \\
                -0.56 & -0.47 \\
                -0.03 & -0.86 \\
                -0.25 & -0.13
            \end{bmatrix*}, 
            &
            D=\begin{bmatrix*}[r]
                -0.65 & 0.92 & 0.41 & 0.54 \\
                -0.95 & 0.52 & 0.29 & -0.54 \\
                0.91 & -0.99 & 0.10 & -0.26 \\
                -0.14 & 0.36 & -0.56 & 0.78
            \end{bmatrix*}. 
        \end{aligned}
        $}
    \notag
\end{align}
For this system, the dual LMI \eqref{eq:ReDLMI_DD} turns out to be feasible, 
and the resulting dual solution $H$ is numerically verified to be $\rank{H}=1$.
The full-rank factorization of $H$ as well as $z^{\ast},w^{\ast}\in\bb{R}^4$ 
in Theorem \ref{th:DD} are
\begin{align}
    \scalebox{0.7}{$
        h_1=\begin{bmatrix*}[r]
            -1.2876 \\
            1.1584
        \end{bmatrix*}, \ 
        h_2=\begin{bmatrix*}[r]
            -0.1118 \\
            0.0000 \\
            -0.1118 \\
            0.1118
        \end{bmatrix*}, \ 
        z^{\ast}=\begin{bmatrix*}[r]
            -0.9880 \\
            0.1900 \\
            -1.0996 \\
            0.3368
        \end{bmatrix*}, \ 
        w^{\ast}=\begin{bmatrix*}[r]
            -0.1118 \\
            0.0000 \\
            -0.1118 \\
            0.1118
        \end{bmatrix*}
        \left( =h_2 \right).
        $}
    \notag
\end{align}
It is clear that $h_1 \ne 0$ (the assertion (i) Theorem \ref{th:DD}). 
Fig. \ref{fig:phi_wc_DD} shows the input-output map 
of the destabilizing nonlinear (piecewise linear) operator $\phi_{\mrm{wc}}$ 
constructed by following the procedure in the preceding subsection. 
From Fig. \ref{fig:phi_wc_DD}, 
we can readily see 
$\phi_{\mrm{wc}}$ is odd and $\phi_{\mrm{wc}}\in\slope{0}{1}$ (the assertion (ii)). 
Fig. \ref{fig:state_trajectory_DD} shows the vector field 
of system $\Sigma$ with the nonlinearity $\Phi=\Phi_{\mrm{wc}}$, 
together with the state trajectories from initial states $x(0)=h_1$ and $x(0)=[-0.5 \ -0.5]^T$.
The state trajectory from the initial state $x(0)=[-0.5 \ -0.5]^T$ converges to the origin. 
However, as proved in Theorem \ref{th:DD}, 
the state trajectory from the initial state $x(0)=h_1$ does not evolve and 
$x(0)=h_1$ is confirmed to be an equilibrium point of the system $\Sigma$ (the assertion (iii)).

\section{CONCLUSIONS}
For the absolute stability analysis of nonlinear feedback systems with slope-restricted nonlinearities, 
we investigated the dual of LMIs formulated with static OZF multipliers in the framework of IQC. 
As the main result, we showed that if the dual solution satisfies a certain rank condition, 
then we can extract a nonlinearity that destabilizes the target feedback system 
from the assumed class of slope-restricted nonlinearities. 
Future topics include the extension of this result 
to the absolute-stability analysis problem of feedback systems with slope-restricted and idempotent nonlinearities 
to which novel sets of OZF-type multipliers have been recently obtained \cite{Yuno_CDC2024}.  








\begin{thebibliography}{99}
    \bibitem{Lessard_SIAM2016}
    L.~Lessard, B.~Recht, and A.~Packard, 
    Analysis and Design of Optimization Algorithms via Integral Quadratic Constraints.
    SIAM Journal on Optimization, vol. 26, no. 1, pp. 57--95, 2016.
    %
    \bibitem{Revay_LCSS2021}
    M.~Revay, R.~Wang, and I.~R. Manchester, 
    A Convex Parameterization of Robust Recurrent Neural Networks, 
    IEEE Control Systems Letters, vol. 5, no. 4, pp. 1363--1368, 2021.
    %
    \bibitem{Ebihara_EJC2021}
    Y.~Ebihara, H.~Waki, V.~Magron, N.~H.~A. Mai, D.~Peaucelle, and S.~Tarbouriech, 
    $l_2$ Induced Norm Analysis of Discrete-Time LTI Systems for
    Nonnegative Input Signals and Its Application to Stability Analysis of
    Recurrent Neural Networks.
    The 2021 ECC Special Issue of the European Journal of Control,
    no. 62, pp. 99--104, 2021.
    %
    \bibitem{Ebihara_CDC2021}
    Y.~Ebihara, H.~Waki, V.~Magron, N.~H.~A. Mai, D.~Peaucelle, and S.~Tarbouriech, 
    Stability Analysis of Recurrent Neural Networks by IQC with
    Copositive Multipliers.
    Proc.~of the 60th Conference on Decision and Control, 
    pp. 5098--5103, 2021.
    %
    \bibitem{Yin_IEEE2022}
    H.~Yin, P.~Seiler, and M.~Arcak, 
    Stability Analysis Using Quadratic Constraints for Systems with
    Neural Network Controllers, 
    IEEE Transactions on Automatic Control, vol. 67, no. 4, pp. 1980--1987, 2022.
    %
    \bibitem{Scherer_IEEEMag2022}
    C.~W. Scherer, 
    Dissipativity and Integral Quadratic Constraints: Tailored
    Computational Robustness Tests for Complex Interconnections, 
    IEEE Control Systems Magazine, vol. 42, no. 3, pp. 115--139, 2022.
    %
    \bibitem{Souza_Automatica2023} 
    C. de Souza, A. Girard, and S. Tarbouriech, 
    Event-Triggered Neural Network Control Using Quadratic Constraints 
    for Perturbed Systems, 
    Automatica, vol. 157, 2023.  
    %
    \bibitem{Tarbouriech_2011} 
     S. Tarbouriech, G. Garcia, G. Silva Jr., and I. Queinnec,
     Stability and Stabilization of Linear Systems with Saturating Actuators,
     Springer, 2011.  
    %
    \bibitem{Khalil_2002}
    H.~Khalil, 
    Nonlinear Systems.
    Prentice Hall, 2002.
    %
    \bibitem{Valmorbida_IEEE2018}
    G. Valmorbida, R. Drummond, and S. R. Duncan, 
    Regional Analysis of Slope-Restricted Lurie systems, 
    IEEE Transactions on Automatic Control, vol. 64, no. 3, pp. 1201--1208, 2018.
    %
    \bibitem{Fagundes_SCL2024}
    A. S. Fagundes, J.M. Gomes da Silva Jr., and S. Tarbouriech,  
    A Hybrid Approach to the Stability Analysis of Sampled-Data Lur'e Systems, 
    System \& Control Letters, vol. 184, 2024.
    %
    \bibitem{O'Shea_IEEE1967}
    R.~O'Shea, 
    An Improved Frequency Time Domain Stability Criterion for Autonomous
    Continuous Systems, 
    IEEE Transactions on Automatic Control, 
    vol. 12, no. 6, pp. 725--731, 1967.
    %
    \bibitem{Zames_SIAM1968}
    G.~Zames and P.~Falb, 
    Stability Conditions for Systems with Monotone and Slope-Restricted Nonlinearities, 
    SIAM Journal on Control, vol. 6, no. 1, pp. 89—108, 1968.
    %
    \bibitem{Carrasco_EJC2016}
    J.~Carrasco, M.C. Turner, and W.P. Heath,  
    Zames-Falb Multipliers for Absolute Stability: From
    O'shea's Contribution to Convex Searches.
    European Journal of Control, vol. 28, no. 1--19, 2016.
    %
    \bibitem{IQC}
    A. Megretski and A. Rantzer, System Analysis via Integral Quadratic
    Constraints, 
    IEEE Transactions on Automatic Control, vol. 42, no. 6, pp. 819–830, 1997.  
    %
    \bibitem{Scherer_EJC2006}
    C. W. Scherer, 
    LMI Relaxations in Robust Control, 
    European Journal of Control, vol 12, no. 1, pp. 3--29, 2006. 
    %
    \bibitem{Yuno_MICNON2024}
    T. Yuno, S. Nishinaka, R. Saeki, Y. Ebihara, 
    V. Magron, D. Peaucelle, S Zoboli, and S. Tarbouriech, 
    Stability Analysis of Feedback Systems with ReLU Nonlinearities via
    Semialgebraic Set Representation,    
    IFAC PapersOnline, 
    vol. 58, no. 28, pp. 138--143, 2024.  
    %
    \bibitem{OZFbasicresult}
    M. Fetzer and C. W. Scherer, Absolute Stability Analysis of Discrete
    Time Feedback Interconnections, IFAC PapersOnline, 
    vol. 50, no. 1, pp. 8447--8453, 2017.  
    %
    \bibitem{green}
    Y. Ebihara, System Control Using LMI, 
    Morikita Syuppan Co., Ltd, Tokyo, 2012. (in Japanese)
    %
    \bibitem{Yuno_CDC2024}
    T. Yuno, S. Nishinaka, R. Saeki, and Y. Ebihara,
    On Static O'Shea-Zames-Falb Multipliers for Idempotent Nonlinearities,
    Proc. Conference on Decision and Control, 2024, 
    to appear.
    %
\end{thebibliography}
\end{document}